\declaretheorem[numberwithin=section]{lemma}
\declaretheorem[sibling=lemma]{theorem}
\declaretheorem[sibling=lemma]{conjecture}
\declaretheorem[sibling=lemma]{corollary}
\declaretheorem[sibling=lemma]{proposition}
\declaretheorem[sibling=lemma,style=remark]{example}
\declaretheorem[sibling=lemma,style=remark]{definition}
\Crefname{inconvenientdefinition}{inconvenientdefinition}{{inconvenientdefinition}}
\newcommand{\bigexists}{%
\mathop{\lower0.75ex\hbox{%
   \scalebox{1.7}{\ensuremath{\exists}}}}\limits}
\DeclareMathOperator{\ddeg}{deg}
\DeclareMathOperator{\content}{content}
\newcommand{\acontent}{\text{$\alpha$-$\content$}}
\DeclareMathOperator{\Tr}{Tr}
\DeclareMathOperator{\tr}{tr}
\DeclareMathOperator{\degg}{deg}
\DeclareMathOperator{\id}{id}
\DeclareMathOperator{\Stab}{Stab}
\DeclareMathOperator{\sym}{sym}
\DeclareMathOperator{\St}{St}
\newcommand{\Sym}[1]{\mathfrak{S}(#1)}
\newcommand{\Embed}{\mathfrak{N}}
\newcommand{\Sfunct}{\mathcal{S}}
\newcommand{\Tfunct}{\mathcal{T}}
\newcommand{\Rfunct}{\mathcal{R}}
\newcommand{\disjoint}{\bullet}
\newcommand{\row}{\otimes}
\newcommand{\V}{\mathcal{V}}
\newcommand{\E}{\mathcal{E}}
\newcommand{\EE}{\mathbb{E}}
\newcommand{\Z}{\mathbb{Z}}
\newcommand{\Q}{\mathbb{Q}}
\newcommand{\N}{\mathbb{N}}
\newcommand{\R}{\mathbb{R}}
\newcommand{\Alg}{\mathcal{A}}
\newcommand{\AlgB}{\mathcal{B}}
\newcommand{\AlgC}{\mathcal{C}}
\newcommand{\condExp}[2]{\EE_{#1}^{#2}}
\newcommand{\condKumu}[2]{k_{#1}^{#2}}
\newcommand{\AllYoung}{\mathbb{Y}}
\newcommand{\Poly}{\mathscr{P}}
\newcommand{\functions}{\mathscr{R}}
\DeclareMathOperator{\Image}{Im}
\DeclareMathOperator{\Ch}{Ch}
\newcommand{\Chtt}{\Ch^{\ttwisted}}
\DeclareMathOperator{\ttwisted}{top}
\DeclareMathOperator{\ttop}{top}
\newcommand{\poly}{p}
\newcommand{\MagicSet}{\mathcal{X}_n}
\newcommand{\Laurent}{\Q\left[A,A^{-1}\right]}
\numberwithin{equation}{section}
\newcommand{\faceAfill}{red!10}
\newcommand{\faceBfill}{blue!20}
\DeclareMathOperator{\partitions}{Part}
\newcommand{\kumuDisjointPoint}{\kappa_{\disjoint}}
\newcommand{\kumuPointRow}{\kappa^{\row}}
\newcommand{\kumuDisjointRow}{\kappa_{\disjoint}^{\row}}
\thanks{Version identifier: \texttt{a7a3d19
}}
\author {Piotr \'Sniady}
\address{
Institute of Mathematics, Polish Academy of Sciences, 
\mbox{ul.~\'Sniadec\-kich 8,} \linebreak 00-956 Warszawa, Poland
} 
\email{psniady@impan.pl}
\title[Jack characters]{Asymptotics of Jack characters} 
\begin{document}

\begin{abstract}
\emph{Jack characters} are a one-parameter deformation of the characters of the symmetric groups;
a deformation given by the coefficients in the expansion of Jack symmetric functions 
in the basis of power-sum symmetric functions. 
We study Jack characters from the viewpoint of the asymptotic representation theory.
In particular, we give explicit formulas for their asymptotically top-degree part,  
in terms of bicolored oriented maps with an arbitrary face structure.
We also study their multiplicative structure and their structure constants
and we prove that they fulfill approximate factorization property, 
a convenient tool for proving Gaussianity of fluctuations of random Young diagrams.
\end{abstract}

\subjclass[2010]{%
Primary   05E05; 
Secondary 
20C30,  
05C10,  
05E10,    
05E15.   
}

\keywords{Jack polynomials, Jack characters, oriented maps, free cumulants,
Kerov polynomials, Kerov--Lassalle polynomials, structure coefficients, approximate factorization of characters}

\maketitle

For a given partition $\lambda\vdash n$
we consider the expansion of the corresponding Schur function in the basis of the power-sum symmetric functions:
\[
s_\lambda=\sum_{\pi\vdash n} \theta_{\pi}(\lambda)\ p_{\pi}.
\]
The normalized coefficient 
\[ \chi_{\lambda}(\pi):= \frac{z_\pi}{n!} \theta_{\pi}(\lambda) = \tr \rho_\lambda(\pi) \]
turns out to be equal to the irreducible character of the symmetric group,
taken with respect to the \emph{normalized trace}
\[ \tr A := \frac{\Tr A}{\Tr 1}.\]
Above,
\[ z_\pi=\prod_i i^{m_i(\pi)}\ m_i(\pi)! \]
is the standard numerical factor, where
$m_i(\pi)$ is number of the parts of $\lambda$ which are equal to $i$.
This observation is the starting point and the initial motivation 
for the following \emph{deformation
of the characters of the symmetric groups}.

\medskip

Following the ideas of Lassalle \cite{Lassalle2008a},
for a given $\alpha>0$ and a partition $\pi\vdash n$
we replace the Schur symmetric function by 
\emph{Jack polynomial} $J^{(\alpha)}_\lambda$ and consider the analogous expansion
in the basis of power-sum symmetric functions:
\begin{equation} 
\label{eq:definition-theta-A}
J^{(\alpha)}_\lambda = \sum_{\pi\vdash n} \theta^{(\alpha)}_\pi(\lambda)\ p_\pi.
\end{equation}
For partitions $\pi,\lambda\vdash n$ 
we define the \emph{irreducible Jack character $\chi^{(\alpha)}_\lambda$} as  
\begin{equation}
\label{eq:character-Jack-unnormalized-zmiana}
\chi^{(\alpha)}_\lambda(\pi) :=  \alpha^{-\frac{|\pi|-\ell(\pi)}{2}}\ \frac{z_\pi}{n!}\ \theta^{(\alpha)}_\pi(\lambda),
\end{equation}
where $\ell(\pi)$ denotes the number of parts of the partition $\pi$.
Those who like the analogy between the Jack characters and the characters of the symmetric groups
may heuristically think that the Young diagram $\lambda$ determines 
some non-existent, mythical \emph{`Jack representation'}
and the partition $\pi$ determines a conjugacy class in the symmetric group $\Sym{n}$.

\medskip

\emph{A growing collection of partial results, unproved conjectures and computer exploration indicates
that such irreducible Jack characters have a rich combinatorial and algebraic structure
which still remains elusive and resembles the one of the irreducible characters of the symmetric groups.}

In the current paper we regard Jack characters from the viewpoint
of the \emph{asymptotic representation theory} which, roughly speaking,
corresponds to the scaling in which the Young diagram $\lambda$ tends 
in some sense to infinity and the conjugacy class $\pi$ remains fixed.
With this perspective in mind our results in this paper are twofold:
firstly, we will find the first-order asymptotics of
Jack characters on a fixed conjugacy class (see \cref{sec:intro1}); secondly, we
will investigate the asymptotics of the multiplicative structure of Jack characters
and their structure constants (see \cref{sec:intro2}).

\section{Introduction part 1: asymptotics of a single Jack character}
\label{sec:intro1}

\subsection{Jack polynomials}
\label{sec:jack-polynomials-motivations}
\emph{Jack polynomials} $\big( J^{(\alpha)}_\pi\big)$ \cite{Jack1970/1971} are 
a family (indexed by an integer partition $\pi$) of symmetric functions
which depend on an additional parameter $\alpha$. 
During the last forty years, 
many connections of Jack polynomials with various fields of mathematics and physics were established: 
it turned out that the combinatorial structure of Jack polynomials plays a crucial role in 
understanding Ewens random permutations model \cite{DiaconisHanlon1992}, 
generalized $\beta$-ensembles and some statistical mechanics models 
\cite{OkounkovOlshanski1997},
Selberg-type integrals \cite{Kaneko1993},
certain random partition models 
\cite{Kerov2000,BorodinOlshanski2005,Matsumoto2008,DoleegaFeray2014}, 
and some problems of the algebraic geometry \cite{Nakajima1996},
among many others.

\subsection{Asymptotic representation theory viewpoint on Jack characters}
The usual way of viewing the characters of the symmetric groups is to fix the representation $\lambda$
and to consider the character as a function of the conjugacy class $\pi$.
However, there is also another very successful viewpoint due to Kerov and Olshanski \cite{KerovOlshanski1994}, 
called \emph{dual approach}, which suggests to
do roughly the opposite. 
We will mention only one of its success stories, namely Kerov's
Central Limit Theorem and its generalizations \cite{Kerov1993gaussian,IvanovOlshanski2002,Sniady2006c}.
Lassalle \cite{Lassalle2008a,Lassalle2009} adapted this dual approach to the framework of Jack characters.

In order for the dual approach to be successful 
one has to choose the most convenient normalization constants.
We will use the normalization introduced by Dołęga and F\'eray 
\cite{DoleegaFeray2014} 
which offers some advantages over the original normalization of Lassalle. 
Thus, with the right choice of the multiplicative constant, the irreducible Jack character
$\chi_{\lambda}^{(\alpha)}(\pi)$ becomes the \emph{normalized Jack character
$\Ch_\pi(\lambda)$}, defined as follows.
\begin{definition}
\label{def:jack-character-classical}
Let $\alpha>0$ be given and let $\pi\vdash n$ be a fixed partition. 
For a partition $\lambda\vdash N$
we define the value of the corresponding \emph{normalized Jack character}
by
\begin{equation}
\label{eq:definition-Jack}
\Ch_{\pi}(\lambda) := 
\begin{cases}
\underbrace{N (N-1) \cdots (N-n+1)}_{\text{$n$ factors}} \
\chi^{(\alpha)}_\lambda (\pi,1^{N-n})
&\text{if } N \ge n ,\\
0 & \text{if }N < n.
\end{cases}
\end{equation}
Each Jack character depends on the deformation parameter $\alpha$; 
in order to keep the notation light we make this dependence implicit.
\end{definition}
In the above definition, the irreducible Jack character $\chi^{(\alpha)}_\lambda$ 
is evaluated on $(\pi,1^{N-n})$ 
which is simply the partition $\pi$ augmented by the necessary
number of parts, all equal to $1$. 
This operation becomes very natural 
if we look on the corresponding conjugacy classes
in the symmetric groups $\Sym{N}\supseteq \Sym{n}$:
this augmentation corresponds to adding the necessary number of fixpoints (=cycles of length $1$)
to a permutation from $\Sym{n}$ so that it becomes a permutation in $\Sym{N}$.
Thus, indeed, investigation of the Jack character $\Ch_\pi$ as a function on the set 
$\AllYoung$ of Young diagrams (without any restrictions on the number of boxes) corresponds to
the scaling in which the Young diagram $\lambda$ tends to infinity while the 
\emph{`conjugacy class'} $\pi$ is fixed.

\subsection{Preliminaries: the filtered algebra $\Poly$, the embeddings}

\subsubsection{The deformation parameters. Laurent polynomials}
In order to avoid dealing with the square root of the variable $\alpha$, 
we introduce an indeterminate $A$ such that
\[ A^2 = \alpha.\]
Several quantities in this paper will be viewed as elements of $\Laurent$, 
i.e., as Laurent polynomials in the variable $A$.

\subsubsection{$\alpha$-content}
The set of Young diagrams will be denoted by $\AllYoung$.
For drawing Young diagrams we use the French convention and the usual Cartesian coordinate system;
in particular, 
the box $(x,y)\in\N^2$ is the one in the intersection of 
the column with the index $x$ and
the row with the index $y$. 
We index the rows and the columns by the elements of the set
\[\N=\{1,2,\dots\}\] 
of positive integers.

\begin{definition}
For a box $\Box=(x,y)$ of a Young diagram we define its \emph{$\alpha$-content} by
\begin{equation}
\label{eq:alpha-content}
 \acontent(\Box)=\acontent(x,y):= A x - \frac{1}{A} y\in \Laurent. 
\end{equation}
\end{definition}

\subsubsection{The algebra $\Poly$ of $\alpha$-polynomial functions on the set of Young diagrams}
\label{sec:polynomial-functions}

For an integer $n\geq 2$ we consider a function $\Tfunct_n\colon\AllYoung\to\Laurent$ given by
\[ \Tfunct_n (\lambda):= (n-1) \sum_{\Box\in\lambda} \big( \acontent(\Box) \big)^{n-2}.\]

We denote by $\Poly$ the filtered unital algebra (over the field $\Q$
of rational numbers)  which is generated by $\gamma, \Tfunct_2, \Tfunct_3, \dots$.
Above we view $\gamma$ as a constant function on $\AllYoung$ given by
\begin{equation} 
\label{eq:gamma}
\gamma := -A+\frac{1}{A}\in \Laurent.    
\end{equation}
The unit of this algebra is $1$ (=the function constantly equal to $1$).
The filtration on $\Poly$ is specified on the generators by
\begin{equation}
\label{eq:filtration}
\left\{
\begin{aligned}
\degg \gamma    &= 1,  \\  
\degg \Tfunct_n &= n \qquad \text{for $n\geq 2$};
\end{aligned}
\right.
\end{equation}
in other words the set of elements of degree at most $d$ is spanned by
\[\left\{ \gamma^{d_1} \Tfunct_2^{d_2}  \Tfunct_3^{d_3} \cdots \quad :
d_1,d_2,\ldots\geq 0,  \sum_i i d_i \leq d \right\}. \]

The elements of this algebra $\Poly$ will be called
\emph{$\alpha$-polynomial functions on the set of Young diagrams}.

\subsubsection{Number of embeddings}
\label{sec:number-of-embeddings}

Let $G$ be a \emph{bicolored graph}, i.e., a bipartite graph together 
with the choice of the coloring of the vertices.
We denote the set of its white (respectively, black) vertices
by $\V_{\circ}$ (respectively, $\V_{\bullet}$).
We will always assume that $G$ has no isolated vertices.
Furthermore, let $\lambda$ be a Young diagram.

\begin{definition}[\cite{FeraySniady2011a,DolegaFeraySniady2008}]
\label{def:embeddings}
We say that $f=(f_1,f_2)$ is an \emph{embedding} of $G$ into $\lambda$ if the functions
\[ f_1\colon \V_{\circ}\to\N, \qquad f_2\colon \V_{\bullet}\to\N\]
are such that the condition 
\begin{equation}
\label{embedding:young}
\text{$\big( f_1(w), f_2(b) \big)$ is one of the boxes of $\lambda$} 
\end{equation}
holds true for each pair of vertices $w\in \V_{\circ}$, $b\in \V_{\bullet}$ connected by an edge.
We denote by $N_G(\lambda)$ the number of embeddings of $G$ into $\lambda$.
\end{definition}

\begin{definition}
We define the \emph{normalized number of embeddings} as
\begin{equation}
\label{eq:normalized-embedding}
 \Embed_G (\lambda):=
{A}^{|\V_\circ(G)|} \left(- A^{-1}\right)^{|\V_\bullet(G)|} 
\ N_{G}(\lambda) \in\Laurent.
\end{equation}
\end{definition}

\begin{definition}
\label{def:bicolored-graph-to-permutations}
To a pair $(\sigma_1,\sigma_2)\in\Sym{n}\times \Sym{n}$ of permutations one can associate a natural bicolored graph
$G(\sigma_1,\sigma_2)$ 
with the white vertices $\V_{\circ}:=C(\sigma_1)$ corresponding to the cycles of $\sigma_1$ and  
the black vertices $\V_{\bullet}:=C(\sigma_2)$ corresponding to the cycles of $\sigma_2$.
A pair of vertices $w\in C(\sigma_1)$, $b\in C(\sigma_2)$ is connected by an edge 
if the corresponding cycles are not disjoint.

We will write 
\begin{align*}
N_{\sigma_1,\sigma_2}(\lambda) & :=N_{G(\sigma_1,\sigma_2)}(\lambda), \\  
\Embed_{\sigma_1,\sigma_2}(\lambda) & :=\Embed_{G(\sigma_1,\sigma_2)}(\lambda).  
\end{align*}

\end{definition}

\subsection{The first main result}

\subsubsection{Top-degree asymptotics of Jack characters}

We say that \emph{$\langle \sigma_1,\sigma_2 \rangle$ is transitive} 
if the group generated by the permutations $\sigma_1,\sigma_2\in \Sym{n}$
acts transitively on the underlying set $[n]=\{1,\dots,n\}$.
We define a function $\Chtt_n\colon\AllYoung\to\Laurent$ given by
\begin{equation}
\label{eq:top-top-top}
 \Chtt_n :=
\frac{-1}{(n-1)!}  \sum_{\substack{\sigma_1,\sigma_2\in \Sym{n} \\ 
\langle \sigma_1,\sigma_2 \rangle \text{ is transitive}}}  
\gamma^{n+1-|C(\sigma_1)|-|C(\sigma_2)|}
\ \Embed_{\sigma_1,\sigma_2},
\end{equation}
where $C(\pi)$ denotes the set of cycles of a permutation $\pi$.
Note that the transitivity implies that the exponent
\[ n+1-|C(\sigma_1)|-|C(\sigma_2)|\geq 0\] 
is always non-negative.

\smallskip

We will show later (in \cref{thm:degree-of-Jack-character})
that the Jack character $\Ch_n\in\Poly$ 
is of degree at most $n+1$.
The following result identifies $\Chtt_n$ defined by \eqref{eq:top-top-top}
as the top-degree part of $\Ch_n\in\Poly$.
\begin{theorem}[The first main result]
\label{theo:second-main-bis}
For each $n\geq 1$ the function
\begin{equation}
\label{eq:top-degree-of-character}
\Ch_n - \Chtt_n
\end{equation}
is an element of $\Poly$ 
of degree at most $n-1$. 
\end{theorem}
We can write this result as the following approximate equality in $\Poly$
which gives the dominant contribution for Jack characters with respect to the filtration which we consider:
\[ \Ch_n \approx \Chtt_n.\]
The proof is postponed to \cref{sec:proof}.

\subsubsection{Top-degree of Jack characters in terms of labeled maps}
\label{sec:labeled-maps}

Recall that a \emph{map} \cite{LandoZvonkin2004} 
is a graph $G$ (possibly, with multiple edges) drawn on a surface $\Sigma$.
We denote the vertex set by $\V$ and the edge set by $\E$.
As usual, we assume that $\Sigma\setminus \E$ is homeomorphic to a collection of open discs.

The sum in \eqref{eq:top-degree-of-character} is taken over the set 
\begin{equation}
\label{eq:magicset}
 \MagicSet:= \big\{ (\sigma_1,\sigma_2) \in \Sym{n} \times \Sym{n} 
: \langle \sigma_1,\sigma_2 \rangle \text{ is transitive} \big\}.   
\end{equation}
To any pair $(\sigma_1,\sigma_2)\in\MagicSet$ in this set we can canonically associate 
a map $M$ which is:
\begin{itemize}
   \item \emph{labeled, with $n$ edges}, i.e., each edge carries some label from the set 
         $[n]$ and each label is used exactly once;
   \item \emph{bicolored}, i.e., the set of vertices $\V=\V(M)$ is decomposed $\V=\V_{\circ}\sqcup \V_{\bullet}$
         into the set $\V_{\circ}=\V_{\circ}(M)$ of white vertices and the set $\V_{\bullet}=\V_{\bullet}(M)$ 
         of black vertices;
         each edge connects two vertices with the opposite colors;
   \item \emph{connected}, i.e., the graph $G$ is connected;
   \item \emph{oriented}, i.e., the surface $\Sigma$ is orientable and has some fixed orientation.
\end{itemize}
This correspondence follows from the observation that
the structure of such a map is uniquely determined by the counterclockwise cyclic order of the edges 
around the white vertices 
(which we declare to be encoded by the disjoint cycle decomposition of the permutation $\sigma_1$)
and by the counterclockwise cyclic order of the edges around the black vertices
(which we declare to be encoded by the disjoint cycle decomposition of the permutation $\sigma_2$).

\begin{example}
\label{example:map-on-torus}
The map shown in \cref{fig:torus} corresponds to the pair 
\[ \sigma_1=(1,4,9,5,7)(2,6)(3,8), \qquad \sigma_2=(1,9)(2,3,5)(4,7)(6,8).\]
\end{example}

Due to this correspondence the sum in \eqref{eq:top-top-top} can be viewed
as a summation over \emph{labeled, oriented, connected maps}.

\medskip

\begin{figure}

\subfloat[]{   
\begin{tikzpicture}[scale=0.5,
white/.style={circle,thick,draw=black,fill=white,inner sep=4pt},
black/.style={circle,draw=black,fill=black,inner sep=4pt},
connection/.style={draw=black,thick,black,auto}
]
\scriptsize

\begin{scope}
\clip (0,0) rectangle (10,10);

\fill[pattern color=\faceAfill,pattern=north west lines] (0,0) rectangle (10,10);
\fill[\faceBfill] (5,5) rectangle (8,8);

\draw (2,7)  node (b1)     [black] {};
\draw (3,2)  node (w1)     [white] {};
\coordinate (w1prim)      at (13,2);
\coordinate (w1bis)       at (3,12);
\coordinate (b1bis)       at (2,-3);

\draw (7,3)  node (bb1)    [black] {};
\coordinate (bb1prim) at (-3,3);

\draw (5,5)  node (AA)     [black] {};
\draw (8,5)  node (BA)     [white] {};
\draw (5,8)  node (AB)     [white] {};
\draw (8,8)  node (BB)     [black] {};

\draw[connection]         (w1)      to node {$5$}  (AA);
\draw[connection]         (AA)      to node {$3$}  (AB);
\draw[connection]         (AB)      to node {$8$}  (BB);
\draw[connection]         (BB)      to node {$6$}  (BA);
\draw[connection]         (BA)      to node [swap] {$2$} (AA);

\draw[connection]         (b1)       to  node [swap,pos=0.3] {$7$} (w1);

\draw[connection]         (bb1)     to node [swap,pos=0.3] {$1$} (w1prim);
\draw[connection]         (w1)      to node [pos=0.2]      {$1$} (bb1prim);

\draw[connection]         (w1)      to node[pos=0.15] {$4$}  (b1bis);
\draw[connection]         (b1)      to node [swap]    {$4$} (w1bis);

\draw[connection]         (w1)      to node [swap]    {$9$} (bb1);

\end{scope}

\draw[very thick,decoration={
    markings,
    mark=at position 0.666  with {\arrow{>}}},
    postaction={decorate}]  
(0,0) -- (10,0);

\draw[very thick,decoration={
    markings,
    mark=at position 0.666  with {\arrow{>}}},
    postaction={decorate}]  
(0,10) -- (10,10);

\draw[very thick,decoration={
    markings,
    mark=at position 0.666  with {\arrow{>>}}},
    postaction={decorate}]  
(10,0) -- (10,10);

\draw[very thick,decoration={
    markings,
    mark=at position 0.666  with {\arrow{>>}}},
    postaction={decorate}]  
(0,0) -- (0,10);

\end{tikzpicture}
\label{fig:torus}
}
\hspace{5ex}
\subfloat[]{
\begin{tikzpicture}[scale=0.5,
white/.style={circle,thick,draw=black,fill=white,inner sep=4pt},
black/.style={circle,draw=black,fill=black,inner sep=4pt},
connection/.style={draw=black,thick,black,auto}
]
\scriptsize

\begin{scope}
\clip (0,0) rectangle (10,10);

\fill[pattern color=\faceAfill,pattern=north west lines] (0,0) rectangle (10,10);
\fill[\faceBfill] (5,5) rectangle (8,8);

\draw (2,7)  node (b1)     [black] {};
\draw (3,2)  node (w1)     [white] {};
\coordinate (w1prim)      at (13,2);
\coordinate (w1bis)       at (3,12);
\coordinate (b1bis)       at (2,-3);

\draw (7,3)  node (bb1)    [black] {};
\coordinate (bb1prim) at (-3,3);

\draw (5,5)  node (AA)     [black] {};
\draw (8,5)  node (BA)     [white] {};
\draw (5,8)  node (AB)     [white] {};
\draw (8,8)  node (BB)     [black] {};

\draw[connection]         (w1)      to  (AA);
\draw[connection]         (AA)      to  (AB);
\draw[connection]         (AB)      to  (BB);
\draw[connection]         (BB)      to  (BA);
\draw[connection]         (BA)      to  (AA);

\draw[connection]         (b1)       to  (w1);
\draw[connection]         (b1)       to  (w1bis);
\draw[connection]         (bb1)      to  (w1prim);

\draw[ultra thick,dashed]  (w1)      to  (bb1);
\draw[connection]         (w1)      to  (bb1prim);
\draw[connection]         (w1)      to  (b1bis);
\end{scope}

\draw[very thick,decoration={
    markings,
    mark=at position 0.666  with {\arrow{>}}},
    postaction={decorate}]  
(0,0) -- (10,0);

\draw[very thick,decoration={
    markings,
    mark=at position 0.666  with {\arrow{>}}},
    postaction={decorate}]  
(0,10) -- (10,10);

\draw[very thick,decoration={
    markings,
    mark=at position 0.666  with {\arrow{>>}}},
    postaction={decorate}]  
(10,0) -- (10,10);

\draw[very thick,decoration={
    markings,
    mark=at position 0.666  with {\arrow{>>}}},
    postaction={decorate}]  
(0,0) -- (0,10);

\end{tikzpicture}
\label{fig:torus-rooted}
}

\caption{\protect\subref{fig:torus}: Example of a \emph{labeled} map drawn on the torus.
The left side of the square should be glued  to the right side, 
as well as bottom to top, as indicated by the arrows.
\newline
\protect\subref{fig:torus-rooted}:
The corresponding \emph{unlabeled} map.
The root edge is marked by the dashed line.}

\end{figure}
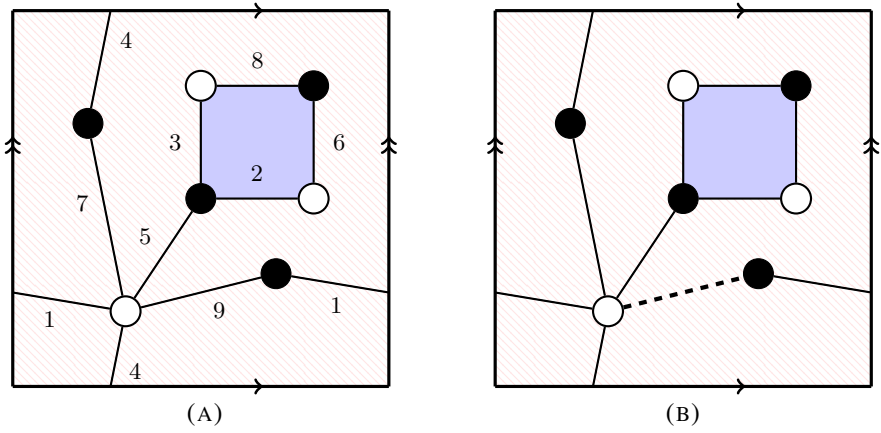

\subsubsection{Top-degree of Jack characters in terms of unlabeled maps}
Informally speaking, an \emph{unlabeled, rooted, oriented map with $n$ edges} is a labeled, oriented map, 
from which all labels have been removed,
except for a single edge. This special edge is called the \emph{root edge}. 
For an example, see \cref{fig:torus-rooted}.

This concept can be formalized as follows: on the set of labeled, oriented maps with $n$ edges 
we consider the action of the symmetric group 
\begin{equation}
\label{eq:symmetric-group-sn-1}
\Sym{n-1}:=\{\pi\in\Sym{n}: \pi(n)=n \}   
\end{equation}
by the permutation of the labels of the edges. 
An \emph{unlabeled map} is defined as an orbit of this action.
The \emph{root edge} is defined as the edge with the label $n$, which is invariant under the action of
$\Sym{n-1}$.

Such unlabeled maps are in a bijective correspondence with the equivalence classes in $\MagicSet/\sim$
with respect to the following equivalence relation:
\[ (\sigma_1,\sigma_2) \sim (\sigma_1',\sigma_2') \iff 
\bigexists_{\substack{\pi\in\Sym{n},\\ \pi(n)=n }}  \sigma'_i=\pi \sigma_i \pi^{-1} \text{ for each } i\in\{1,2\}.
\]
The equivalence classes are nothing else but the orbits of the obvious action of the group $\Sym{n-1}$
on $\MagicSet$ by coordinate-wise conjugation.

Let $\pi\in\Stab(\sigma_1,\sigma_2)\subseteq\Sym{n-1}$ 
belong to the stabilizer of some $(\sigma_1,\sigma_2)\in\MagicSet$
with respect to the above action of $\Sym{n-1}$;
in other words 
\begin{equation}
\label{eq:stabilizer}
 \sigma_i =\pi \sigma_i \pi^{-1} \text{ for each } i\in\{1,2\}.  
\end{equation}
The set of fixpoints of $\pi$ is non-empty (it contains, for example, $n$). 
Furthermore, if $x\in [n]$ is a fixpoint of $\pi$, 
then \eqref{eq:stabilizer} implies that $\sigma_i(x)$ is also a fixpoint. 
As $\langle \sigma_1,\sigma_2 \rangle$ is transitive, it follows 
that all elements of $[n]$ are fixpoints, thus $\pi=\id$. In this way we proved that 
$\Stab(\sigma_1,\sigma_2)=\{\id\}$, thus each equivalence class consists of exactly 
$\frac{\left| \Sym{n-1} \right|}{\left| \Stab(\sigma_1,\sigma_2) \right|}=(n-1)!$ elements. 
Since the number of embeddings $\Embed_{\sigma_1,\sigma_2}$ is constant on each 
equivalence class, we have proved the following result.

\begin{corollary}
\label{coro:nonoriented-maps}
The top-degree of Jack character \eqref{eq:top-top-top} can be written as
a sum over 
\emph{rooted, oriented, bicolored, connected maps $M$ with $n$ unlabeled edges}:
\begin{equation}
\label{eq:top-top-top2}
 \Chtt_n =
(-1)  \sum_{M}  
\gamma^{n+1-|\V(M)|}
\ \Embed_{M}.
\end{equation}
\end{corollary}

\subsubsection{Application: Kerov--Lassalle polynomials}

In the context of the asymptotic representation theory
a convenient way of parametrizing the shape of a Young diagram 
is provided by \emph{free cumulants} \cite{Biane1998}.
For an integer $n\geq 2$ the corresponding free cumulant
$\Rfunct_n\colon\AllYoung\to\Laurent$ is a function on the set of Young diagrams
defined as
    \begin{equation}
    \label{eq:definition-free-cumulant}    
    \Rfunct_k(\lambda) := (-1) \sum_{\sigma_1,\sigma_2}  \Embed_{\sigma_1,\sigma_2}(\lambda),
    \end{equation}
where sum in \eqref{eq:definition-free-cumulant} runs over 
pairs of permutations $\sigma_1,\sigma_2\in\Sym{k-1}$ with the property that: 
     \begin{enumerate}[label=(\alph*)]
        \item \label{item:free-cumulant-A} their product $\sigma_1 \sigma_2=(1,2,\dots,k-1)$ is the full cycle, and
        \item \label{item:free-cumulant-B} their total number of cycles fulfills $|C(\sigma_1)|+|C(\sigma_2)|=k$.
     \end{enumerate} 
Such pairs $(\sigma_1,\sigma_2)$ can be identified with plane rooted trees.

It turns out (cf.~\cref{prop:generate-the-same=free})
that the filtered unital algebra $\Poly$ can be alternatively viewed as generated by
$\gamma,\Rfunct_2,\Rfunct_3,\dots$ with the degrees of the generators 
\begin{equation}
\label{eq:filtration-5}
\left\{
\begin{aligned}
\degg \gamma    &= 1,  \\  
\degg \Rfunct_n &= n  \qquad \text{for $n\geq 2$}.
\end{aligned}
\right.
\end{equation}

\medskip

Each Jack character can be expressed in terms of these generators
by \emph{Kerov--Lassalle polynomials}; 
for example 
\begin{equation}
\label{eq:Kerov-example}
\Ch_4 =\underbrace{\Rfunct_{5} + 6 \Rfunct_{4} \gamma + \Rfunct_{2}^{2} \gamma + 11 \Rfunct_{3} \gamma^{2} + 6
\Rfunct_{2} \gamma^{3}}_{\Chtt_4} + 5 \Rfunct_{3} + 7 \Rfunct_{2} \gamma.
\end{equation}
Some partial theoretical results \cite{Lassalle2009} as well as computer explorations indicate
that all coefficients of such a polynomial for $\Ch_n$ are conjecturally non-negative integers.

\smallskip

There are some fairly standard techniques \cite{DolegaFeraySniady2008} which can be used to find an explicit form 
of Kerov--Lassalle polynomial for $\Chtt_n$ given by \cref{coro:nonoriented-maps}.
In particular, it follows that such a Kerov--Lassalle polynomial is homogeneous of degree $n+1$
and is a sum of monomials with non-negative integer coefficients.
\cref{theo:second-main-bis} implies therefore that \emph{the homogeneous part of degree $n+1$
of Kerov--Lassalle polynomial for $\Ch_n$ is equal to 
the analogous Kerov--Lassalle polynomial for $\Chtt_n$}.
In the example \eqref{eq:Kerov-example} this homogeneous part was indicated by the curly braces.

\begin{corollary}
\label{coro:Kerov-Lassalle}
For each integer $n\geq 1$, the homogeneous part of degree $n+1$ 
of Kerov--Lassalle polynomial for $\Ch_n$ 
is a sum of monomials in the generators \eqref{eq:filtration-5} 
with \emph{non-negative integer coefficients}.
\end{corollary}
The details of the proof are postponed to \cref{sec:proof-theo-kerov-lassalle}.

\subsubsection{Top-degree of Jack characters in terms of weighted unicellular maps}
We refer to \cite{Czyzewska-Jankowska2017} for an equivalent formula for
$\Chtt_n$ which is expressed in terms of \emph{unicellular non-oriented maps}, weighted 
according to some specific measure of non-orientability. 

It is also
worth to mention the long-standing open problem formulated by
Goulden and Jackson \cite{Goulden1996} known as the
$b$--conjecture. Goulden and Jackson defined certain
rational functions related to Jack polynomials and they conjectured
that, in fact, they are polynomials
with nonnegative integer coefficients. Dołęga \cite{Dolega2017a} found
the top-degree part of these expressions with striking similarities to our formulas. Although we cannot
translate one of these results into the other, we cannot resist to state
that there has to be a strong connection between both problems.

\section{Introduction part 2: the multiplicative structure of Jack characters}
\label{sec:intro2}

\subsection{Conditional cumulants}
\label{sec:conditional-cumulants}
Let $\Alg$ and $\AlgB$ be commutative unital algebras 
and let $\EE\colon\Alg\to\AlgB$ be a unital linear map.
We will refer to $\EE$ as a \emph{conditional expectation}. 

For any tuple $x_1,\dots,x_n\in\Alg$ we define their \emph{conditional cumulant} as
\begin{multline} 
\label{eq:what-is-cumulant}
\condKumu{\Alg}{\AlgB}(x_1,\dots,x_n) =  
[t_1 \cdots t_n] \log \EE e^{t_1 x_1+\dots+t_n x_n} = \\
\left. \frac{\partial^n}{\partial t_1 \cdots \partial t_n}
\log \EE e^{t_1 x_1+\dots+t_n x_n} \right|_{t_1=\cdots=t_n=0} \in\AlgB
\end{multline}
where the operations on the right-hand side should be understood 
in the sense of formal power series in the variables
$t_1,\dots,t_n$.

\subsection{Approximate factorization property}

\begin{definition}
Let $\Alg$ and $\AlgB$ be filtered unital algebras and let $\EE:\Alg\rightarrow\AlgB$ be a 
unital linear map.
We say that \emph{$\EE$ has approximate factorization property}
\cite{Sniady2006c}
if for all $l\geq 1$ and all choices of $x_1,\dots,x_l\in\Alg$ we have that
\[ \degg_{\AlgB} k_{\Alg}^{\AlgB}(x_1,\dots,x_l) \leq 
\left(\degg_{\Alg} x_1\right) + \cdots + \left(\degg_{\Alg} x_l \right)-2(l-1).\]
\end{definition}

\subsection{Disjoint product}
We introduce a parameter
\[ \delta := - \gamma, \]
cf.~\eqref{eq:gamma}, on which Jack characters depend implicitly.
We will show later in \cref{prop:filtration-by-characters}
that if we regard $\Poly$ as a $\Q[\delta]$-module, it is a free module with the basis $(\Ch_\pi)$
where $\pi$ runs over the set of partitions.
We will also show that 
with respect to this basis the filtration on $\Poly$ from \eqref{eq:filtration} 
can be equivalently defined by
\begin{equation}
\label{eq:filtration-2}
\left\{
\begin{aligned}
\degg \delta    &= 1,  \\  
\degg \Ch_{\pi} &= |\pi|+\ell(\pi) \qquad \text{for any partition $\pi$.}
\end{aligned}
\right.
\end{equation}

\medskip

This allows us to define a new multiplication on $\Poly$ (which we call \emph{disjoint product}) 
by setting on the generators
\[ \Ch_{\pi} \bullet \Ch_{\sigma} := \Ch_{\pi\sigma}, \]
where $\pi \sigma$ denotes the \emph{concatenation} of the partitions $\pi$ and $\sigma$.
For example,
\[ \Ch_{4,3,1} \bullet \Ch_{5,4,2} = \Ch_{5,4,4,3,2,1}. \]
It is easy to check that this product is commutative and associative;
the linear space of $\alpha$-polynomial functions equipped with this multiplication becomes 
an algebra which will be denoted by $\Poly_\disjoint$.
Thanks to \eqref{eq:filtration-2} is easy to check that the usual filtration 
\eqref{eq:filtration-2}
works fine also with this product; in this way $\Poly_\disjoint$ becomes a filtered algebra.

\subsection{Cumulants $\kumuDisjointPoint$}
We consider the filtered unital algebras $\Poly_\disjoint$ and
$\Poly$, and as a conditional expectation between them we take the identity map
\begin{equation}
\label{eq:commutative-diagram-A}
\begin{tikzpicture}[node distance=2cm, auto, baseline=-0.8ex]
  \node (A) {$\Poly_\disjoint$};
  \node (B) [right of=A] {$\Poly.$};

  \draw[->] (A) to node {$\id$} (B);
\end{tikzpicture}
\end{equation}
The corresponding cumulants will be denoted by $\kumuDisjointPoint$.

Computer exploration suggests that the expansions of the cumulants 
$\kumuDisjointPoint$ in the module basis \eqref{eq:filtration-2}
take a form which is interesting from the viewpoint of algebraic combinatorics
and encourages stating the following conjecture.
\begin{conjecture}
\label{conj:connection-coefficients-steroids}
For partitions $\pi_1,\dots,\pi_\ell$ we consider the expansion
\begin{equation}
\label{eq:steroids}
\kumuDisjointPoint(\Ch_{\pi_1},\dots,\Ch_{\pi_\ell}) = \sum_{\sigma} d^{\sigma}_{\pi_1,\dots,\pi_\ell}(\delta)
\ \Ch_\sigma.   
\end{equation}

Then $(-1)^{\ell-1} d^{\sigma}_{\pi_1,\dots,\pi_\ell} \in \Q[\delta]$ 
is a polynomial with non-negative integer coefficients.
\end{conjecture}

\subsection{The second main result: 
approximate factorization property}
\label{sec:main-cumulants}

\cref{conj:connection-coefficients-steroids} is beyond our reach.
Nevertheless we will prove a partial result about the form of the left-hand side of \eqref{eq:steroids},
namely that $\kumuDisjointPoint(\Ch_{\pi_1},\dots,\Ch_{\pi_\ell})\in\Poly$ is of degree at most
\[ \left( \sum_{i}  |\pi_i| + \ell(\pi_i) \right) - 2(\ell-1).  \]
This concrete claim can be reformulated in an abstract language as follows.
\begin{theorem}[The second main result]
\label{theo:factorization-of-characters}
The identity map
\begin{equation*}
\begin{tikzpicture}[node distance=2cm, auto, baseline=-0.8ex]
  \node (A) {$\Poly_\disjoint$};
  \node (B) [right of=A] {$\Poly.$};

  \draw[->] (A) to node {$\id$} (B);
\end{tikzpicture}
\end{equation*}
has approximate factorization property.

\end{theorem}
In the special case of the characters of the symmetric groups (i.e.~$A=1$, $\gamma=0$) this result was proved in \cite{Sniady2006c}.
The proof will be presented in \cref{sec:key-tool-proof}.
In a forthcoming joint paper with Dołęga
\cite{DolegaSniady2014} we will present applications of this result to
investigations of some natural models of random Young diagrams related to Jack polynomials.

\section{Heuristics: Towards the proof}

\subsection{The key tool}
\label{sec:keykeykeytool}
The main difficulty in both main results of this paper
(\cref{theo:second-main-bis} and
\cref{theo:factorization-of-characters})
is to show that a given $\alpha$-polynomial function $F\in\Poly$ is of smaller degree than
one would expect from some trivial bounds.
Our key tool will be \cref{lem:keylemma}
which provides three conditions:
\ref{zero:topdegree},
\ref{zero:vanishing}, and
\ref{zero:laurent}
which together guarantee that $F\in\Poly$ is of smaller degree than
initially suspected. The only really troublesome of them is condition \ref{zero:vanishing}
and we shall discuss it in the following.

\subsection{Finite difference operators}
\label{sec:why-finite-difference-nice}
  
Roughly speaking, the latter condition \ref{zero:vanishing}
is formulated in terms of the finite difference operators $\Delta_{\lambda_1},\Delta_{\lambda_2},\dots$
adapted to the context of functions $F(\lambda_1,\lambda_2,\dots)$ on the set $\AllYoung$ of Young diagrams.
This approach is hardly surprising as the finite difference operators have a long record of being useful
in combinatorics. In our context when $F\in\Poly$ is an $\alpha$-polynomial function,
and henceforth
$(\lambda_1,\dots,\lambda_k)\mapsto F(\lambda_1,\dots,\lambda_k)$ is a multivariate polynomial
in the lengths of the rows of a Young diagram, it is convenient that the application of
each finite difference operator decreases the degree of this multivariate polynomial by one.

\subsection{The difficulty}
The subtle issue is that 
for a function $F\colon \AllYoung\to\Laurent$ on the set of Young diagrams,
the evaluation on a Young diagram $(\lambda_1,\dots,\lambda_k)$ 
\begin{multline} 
\label{eq:finite-difference-explanation}
\left( \Delta_{\lambda_1} \cdots \Delta_{\lambda_k} F\right)(\lambda_1,\dots,\lambda_k)= \\
\sum_{\epsilon_1,\dots,\epsilon_k\in\{0,1\} } (-1)^{k-(\epsilon_1+\cdots+\epsilon_k)}  
F(\lambda_1+\epsilon_1,\dots,\lambda_k+\epsilon_k)     
\end{multline}
is a linear combination (with integer coefficients) of the values of $F$ on 
vectors $(\lambda_1+\epsilon_1,\dots,\lambda_k+\epsilon_k)$ which might \emph{not} be Young diagrams;
therefore the function $F$ might be not well-defined there.

The way to overcome this difficulty is to extend in some convenient way
the domain of the multivariate function
\[ \AllYoung\ni (\lambda_1,\dots,\lambda_k) \mapsto F(\lambda_1,\dots,\lambda_k); \]
for the extension 
\begin{equation}
\label{eq:extension-o-co-chodzi}
 \N_0^k \ni (\lambda_1,\dots,\lambda_k) \mapsto F^{\sym}(\lambda_1,\dots,\lambda_k)    
\end{equation}
the corresponding analogue of \eqref{eq:finite-difference-explanation} is well-defined.

Regretfully, this extension \eqref{eq:extension-o-co-chodzi} is no longer given by
a multivariate polynomial. For this reason it is not clear if the virtues
of the finite difference operators which we discussed in \cref{sec:why-finite-difference-nice}
are still applicable. Since the objects which we work with are no longer polynomials,
we cannot say that the finite difference operator decreases their degree.

\subsection{Solution: row functions}
A solution which we present in the current paper 
is to replace the notion of multivariate polynomials
by a larger algebra of functions 
(\emph{the algebra of row functions $\functions$}) which would be more compatible with the 
aforementioned procedure of extension of the domain.
The difficulty is to define the filtration on this algebra in such a way that
the application of the finite difference operator would still 
decrease the degree.

\subsection{Content of the paper}

The only information about the Jack characters that
is necessary for our purposes is contained in the work of
Dołęga and F\'eray \cite{DoleegaFeray2014}.
We review their findings in \cref{sec:vk-scaling}.
\cref{sec:RST} provides technical tools 
which are necessary to translate the results of Dołęga and F\'eray
to our notations.

In \cref{sec:row-functions} we introduce the aforementioned algebra $\functions$
of row functions. 
In \cref{sec:how-to-prove-small} we state the key tool
(which was discussed in \cref{sec:keykeykeytool}) for proving the degree bounds for 
$\alpha$-polynomial functions.

\cref{sec:afp-vertical,sec:cumulants-long-diagonal,sec:key-tool-proof}
are devoted specifically to the proof of the second main result,
\cref{theo:factorization-of-characters}.

\cref{sec:degree-n+1,sec:proof} are devoted specifically to the proof of the first main result, 
 \cref{theo:second-main-bis}.

\section{Preliminaries: Various functionals of Young diagrams}
\label{sec:RST}

\subsection{Smooth functionals of shape}
For an integer $n\geq 2$ we define the \emph{(anisotropic) functional of shape}
\begin{equation}
\label{eq:smooth-functional}
 \Sfunct_n (\lambda):= 
(n-1) \iint_{(x,y)\in\lambda} \big( \acontent(x,y) \big)^{n-2}\, \mathrm{d}x\, \mathrm{d}y \in\Laurent,
\end{equation}
where the integral is taken over the Young diagram $\lambda$ viewed as a subset of $\R^2$; 
in other words it is an integration over $x$ and $y$ such that
\[ y> 0 \qquad \text{and} \qquad 0< x \leq \lambda_{\lceil y \rceil}.\]

\subsection{Anisotropic Stanley polynomials}

\subsubsection{Multirectangular coordinates}
\label{sec:multirectangular-coordinates}

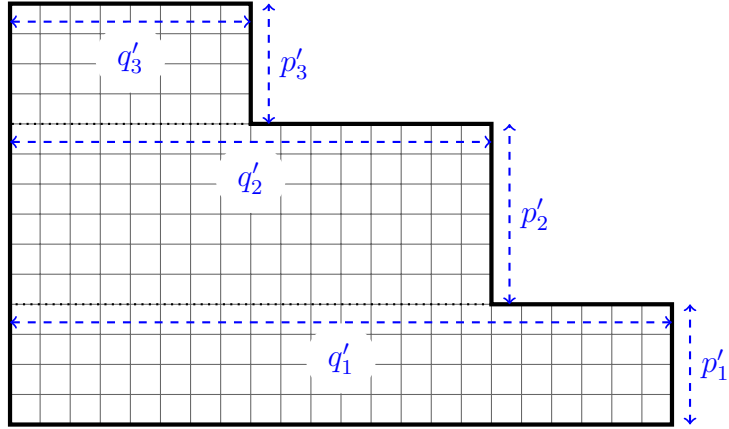
\begin{figure}[t]
\begin{tikzpicture}[scale=0.8]

\begin{scope}
\clip (0,0) -- (11,0) -- (11,2) -- (8,2) -- (8,5) -- (4,5) -- (4,7) -- (0,7) -- cycle;
\draw[black!60] (0,0) grid[step=0.5] (20,10); 
\end{scope}

\draw[ultra thick](0,0) -- (11,0) -- (11,2) -- (8,2) -- (8,5) -- (4,5) -- (4,7) -- (0,7) -- cycle;

\draw[dotted,thick] (8,2) -- (0,2)
(4,5) -- (0,5);

\begin{scope}[<->,thick,auto,dashed,blue]
\draw (11.3,0) to node[swap] {$p'_1$} (11.3,2);
\draw (0,1.7) to node[swap,shape=circle,fill=white] {$q'_1$} (11,1.7);

\draw (8.3,2) to node[swap] {$p'_2$} (8.3,5);
\draw (0,4.7) to node[swap,shape=circle,fill=white] {$q'_2$} (8,4.7);

\draw (4.3,5) to node[swap] {$p'_3$} (4.3,7);
\draw (0,6.7) to node[swap,shape=circle,fill=white] {$q'_3$} (4,6.7);
\end{scope}

\end{tikzpicture}

\caption{Multirectangular Young diagram $P'\times Q'$.}

\label{fig:multirectangular}
\end{figure}

We start with \emph{anisotropic multirectangular coordinates} $P=(p_1,\dots,p_\ell)$ and $Q=(q_1,\dots,q_\ell)$.
They give rise to \emph{isotropic multirectangular coordinates} given by
\begin{align*}
P'=(p'_1,\dots,p'_\ell):&= \left( A p_1, \dots, A p_\ell \right),\\
Q'=(q'_1,\dots,q'_\ell):&= \left( \frac{1}{A} q_1, \dots, \frac{1}{A} q_\ell \right).
\end{align*}

Suppose that $A\in\R\setminus\{0\}$ and $P,Q$ are such that 
$P'=(p'_1,\dots,p'_\ell)$ and $Q=(q'_1,\dots,q'_\ell)$ are sequences of non-negative integers 
such that $q'_1\geq \cdots \geq q'_\ell$; we consider the \emph{multirectangular} Young diagram
\[P' \times Q' = (\underbrace{q'_1,\dots,q'_1}_{\text{$p'_1$ times}},\dots,
\underbrace{q'_\ell,\dots,q'_\ell}_{\text{$p'_\ell$ times}}).\]
This concept is illustrated in \cref{fig:multirectangular}. 

\subsubsection{Anisotropic Stanley polynomials}
\label{subsec:stanley-polynomials}
Let $\St=(\St_1,\St_2,\dots)$ be a sequence of polynomials such that for each $\ell\geq 1$ 
\[\St_\ell=\St_\ell(\gamma;p_1,\dots,p_\ell;q_1,\dots,q_\ell)=\St_\ell(\gamma; P; Q) \] 
is a polynomial in $2\ell+1$ variables and
\[ \St_{\ell+1}(\gamma;p_1,\dots,p_\ell,0;q_1,\dots,q_\ell,0)=\St_\ell(\gamma;p_1,\dots,p_\ell;q_1,\dots,q_\ell). \]
We assume furthermore that the degrees of the polynomials $\St_1,\St_2,\dots$ are uniformly bounded by some integer $d$;
we say then that the degree of $\St$ is at most $d$.

\begin{definition}
\label{def:stanley-polynomial}
Let $F\colon \AllYoung\to \Laurent$ be a function on the set $\AllYoung$ of Young diagrams.
Suppose that for each $\ell\geq 1$ the equality
\[  F(P'\times Q')=\St_\ell(\gamma;P ; Q)\]
--- with the usual substitution \eqref{eq:gamma} for the variable $\gamma$ --- 
holds true for all choices of $\ell\geq 1$, $P$, $Q$ and $A\neq 0$ for which the multirectangular diagram $P'\times Q'$ is well-defined.
Then we say that $\St$ is the \emph{anisotropic Stanley polynomial} for $F$.
For a given function $F$, the corresponding Stanley polynomial, if exists, is unique
(in order to show this, one can adapt the corresponding part of the proof of \cite[Lemma 2.4]{DolegaFeraySniady2013}).
\end{definition}

\subsection{Isotropic Stanley polynomials}
\label{subsec:stanley-polynomials-isotropic}
We consider now the specialization of the concept of anisotropic Stanley polynomials 
to the special choice of $A=1$, $\gamma=0$. The resulting objects will be called 
\emph{isotropic Stanley polynomials}.

More specifically, 
let $\St'=(\St'_1,\St'_2,\dots)$ be a sequence of polynomials such that for each $\ell\geq 1$ 
\[\St'_\ell=\St'_\ell(p_1,\dots,p_\ell;q_1,\dots,q_\ell)=\St'_\ell(P; Q) \] 
is a polynomial in $2\ell$ variables and
\[ \St'_{\ell+1}(p_1,\dots,p_\ell,0;q_1,\dots,q_\ell,0)=\St'_\ell(p_1,\dots,p_\ell;q_1,\dots,q_\ell). \]
We assume furthermore that the degrees of the polynomials $\St'_1,\St'_2,\dots$ are uniformly bounded by some integer $d$;
we say then that the degree of $\St'$ is at most $d$.

\begin{definition}
\label{def:stanley-polynomial-iso}
Let $F\colon \AllYoung\to \Q$ be a function on the set $\AllYoung$ of Young diagrams.
Suppose that for each $\ell\geq 1$ the equality
\[  F(P \times Q)=\St'_\ell(P ; Q)\]
holds true for all choices of $\ell\geq 1$, $P$, $Q$ for which the multirectangular diagram $P\times Q$ is well-defined.
Then we say that $\St'$ is the \emph{isotropic Stanley polynomial} for $F$.
\end{definition}

\subsection{The isotropic case $\alpha=1$}

For $k\geq 2$ we denote by $T_k$, $S_k$ and $R_k$ 
the versions of the functionals $\Tfunct_k$, $\Sfunct_k$ and $\Rfunct_k$
which were specialized to the case $A=1$:
\begin{align}
\nonumber
T_k (\lambda) &:= (n-1) \sum_{\Box=(x,y)\in\lambda} \big( x-y \big)^{k-2} \in \Q, \\
\label{eq:what-is-isotropic-s}
S_k (\lambda) &:= 
(k-1) \iint_{(x,y)\in\lambda} (x-y)^{k-2}\, \mathrm{d}x\, \mathrm{d}y \in \Q, \\
\label{eq:what-is-isotropic-r}
R_k(\lambda) &:= (-1) \sum_{\sigma_1,\sigma_2}  (-1)^{|C(\sigma_2)|} N_{\sigma_1,\sigma_2}(\lambda)\in \Q,
\end{align}
for any Young diagram $\lambda$,
where the sum in \eqref{eq:what-is-isotropic-r} runs over the same set as in \eqref{eq:definition-free-cumulant}.

\subsection{Anisotropic vs isotropic}
\label{sec:anisotropic-isotropic}

\begin{lemma}
\label{lem:stanley-polunomial-isotropic-and-anisotropic-the-same}
For each bicolored graph $G$ 
the anisotropic Stanley polynomial for the function
\begin{equation}
\label{eq:re-signed-embed}
\lambda\mapsto(-1)^{|\V_\bullet(G)|}\ \Embed_G(\lambda)    
\end{equation}
exists and coincides with 
the isotropic Stanley polynomial for the function $\lambda\mapsto N_G(\lambda)$.
This polynomial is homogeneous of degree $|\V(G)|$.
When viewed as a polynomial in the variables $q_1,q_2,\dots$ with coefficients in
$\Q[p_1,p_2,\dots]$, this polynomial is homogeneous of degree $|\V_{\circ}(G)|$.
\end{lemma}
\begin{proof}
As pointed out in the proof of \cite[Lemma 2.4]{DolegaFeraySniady2013},
a slight variation of \cite[Lemma 3.9]{FeraySniady2011} shows that if $G$ is an arbitrary bicolored graph,
then for the function \eqref{eq:re-signed-embed}
the corresponding anisotropic Stanley polynomial exists and 
does not involve the variable $\gamma$. 
In particular, it 
coincides with 
the isotropic Stanley polynomial for the function $N_G$ from \cref{def:embeddings}.
\end{proof}

\begin{lemma}
\label{eq:S-is-for-Stanley}
For each integer $n\geq 2$,
the anisotropic Stanley polynomial for 
$\lambda\mapsto \Sfunct_n(\lambda)$ exists and 
coincides with the isotropic Stanley polynomial for the isotropic 
functional $\lambda\mapsto S_n(\lambda)$.
These polynomials are homogeneous of degree $n$.
\end{lemma}
\begin{proof}  
We use the notations from \cref{sec:multirectangular-coordinates}.
An elementary integration shows that
whenever $P'\times Q'\in\AllYoung$, then
\begin{multline}
\label{eq:sfunct-in-stanley-coordinates}
\Sfunct_n(P'\times Q')=\\
\frac{-1}{n} \sum_{i\geq 1}  
\Big[ 
\big(-(p_1+\dots+p_{i-1})\big)^{n}
-\big(-(p_1+\dots+p_{i})\big)^{n}-\\
-\big(q_i-(p_1+\dots+p_{i-1})\big)^n
+\big(q_i-(p_1+\dots+p_{i})\big)^n
 \Big]. 
\end{multline}
It is worth pointing out that 
the right-hand side is a polynomial which does not involve the variable $\gamma$.

An analogous calculation performed for $S_n(P\times Q)$ gives the same polynomial.
\end{proof}

\begin{lemma}
\label{eq:R-is-for-Ranley}
For each integer $n\geq 2$,
the anisotropic Stanley polynomial for 
$\lambda\mapsto \Rfunct_n(\lambda)$ exists and 
coincides with the isotropic Stanley polynomial for the isotropic 
functional $\lambda\mapsto R_n(\lambda)$.
These polynomials are homogeneous of degree $n$.
\end{lemma}
\begin{proof}
It is a direct consequence of \cref{lem:stanley-polunomial-isotropic-and-anisotropic-the-same}
and the definitions \eqref{eq:definition-free-cumulant}, \eqref{eq:what-is-isotropic-r}.
\end{proof}

\subsection{Discrete vs smooth}

\begin{proposition}
\label{prop:generate-the-same}
The filtered unital algebra $\Poly$ of $\alpha$-polynomial functions
(cf.~\cref{sec:polynomial-functions})
can be alternatively viewed as generated by the elements $\gamma,\Sfunct_2,\Sfunct_3,\dots$
with the degrees of the generators given by
\begin{equation}
\label{eq:filtration2}
\left\{
\begin{aligned}
\degg \gamma    &= 1,  \\  
\degg \Sfunct_n &= n \qquad \text{for $n\geq 2$}.
\end{aligned}
\right.
\end{equation}
\end{proposition}

The above result follows from the following lemma which shows that the passage from the algebraic base
$\Sfunct_2,\Sfunct_3,\dots$ to the algebraic base  $\Tfunct_2,\Tfunct_3,\dots$ (as well as 
passage in the opposite direction)
is given by \emph{linear} equations (with the coefficients in $\Q[\gamma]$)
with appropriate degree bounds.

\begin{lemma}
\label{eq:same-filtered-vector-space}
The following two $\Q[\gamma]$-modules are equal as filtered vector spaces over $\Q$:
\begin{itemize}
   \item the module spanned by $\Tfunct_2,\Tfunct_3,\dots$ with the filtration defined on the generators by
\eqref{eq:filtration};

   \item the module spanned by $\Sfunct_2,\Sfunct_3,\dots$ with the filtration defined on the generators by
\eqref{eq:filtration2}.
\end{itemize}
\end{lemma}
In other words, the lemma states that for each integer $d$ the following two linear spaces are equal:
\begin{multline}
\label{eq:filtrations-equal} 
 \operatorname{span}\left\{ \gamma^k \Tfunct_n : k\geq 0, \; n\geq 2, \; k+n \leq d \right\} = \\
\operatorname{span}\left\{ \gamma^k \Sfunct_n : k\geq 0, \; n\geq 2, \; k+n \leq d \right\}.
\end{multline}
\begin{proof}   
We start by expressing $\Sfunct_n$ in terms of the generators \eqref{eq:filtration}.

A single box $(x_0,y_0)\in\N^2$ of a Young diagram, when viewed as a subset of the plane, becomes  
the square
\[\{(x,y): x_0< x \leq x_0+1, \quad  y_0<y \leq y_0+1\} \subset \R^2.\]
The integral on the right-hand side of \eqref{eq:smooth-functional}
restricted to this box is given by:
\begin{multline}
\label{eq:integration-single-box}
(n-1) \int_{x_0}^{x_0+1} \left[ \int_{y_0}^{y_0+1}  (A x - A^{-1} y)^{n-2} \, \mathrm{d}y \right] \mathrm{d}x  = \\ 
\frac{-1}{n} \left[ 
\left(c+A-A^{-1}\right)^{n}
- \left(c+A\right)^{n}
- \left(c-A^{-1}\right)^{n}
+c^n
 \right] ,
\end{multline}
where on the right-hand side
\[ c:= \acontent(x_0,y_0)= Ax_0-A^{-1} y_0.\]

We shall view the right-hand side of \eqref{eq:integration-single-box} as a polynomial in the variable $c$
of the following form:
\[ \sum_{2\leq k\leq n+2} d_k\ (k-1)\ c^{k-2}\]
with the coefficients $d_2,\dots,d_{n+2}\in\Laurent$ given by
\[
d_k= 
\frac{\binom{n}{k-2}}{n (k-1)} 
\left[ 
- \left(A-A^{-1}\right)^{n+2-k}
+ A^{n+2-k}
+ \left(-A^{-1}\right)^{n+2-k}
- 0^{n+2-k}
 \right].\]
Each coefficient $d_k$ is a Laurent polynomial which is invariant under 
the automorphism 
\begin{equation}
\label{eq:automorphism}
A\leftrightarrow -\frac{1}{A};
\end{equation}
an automorphism which is given explicitly as
\[ \Laurent\ni \sum_{k\in\Z} f_k A^k \mapsto  \sum_{k\in\Z} f_k \left( - \frac{1}{A} \right)^k \in \Laurent.
\]
An elementary calculation based on the binomial formula shows that --- due to cancellations --- 
$d_k$ is a Laurent polynomial of degree at most $n-k$ for each $2\leq k\leq n$.
Furthermore, $d_{n+1}=d_{n+2}=0$ and $d_n=1$.

By comparing the dimensions it follows that the space of the Laurent polynomials
of degree at most $n-k$ which are invariant under the automorphism \eqref{eq:automorphism}
is spanned by $1,\gamma,\dots,\gamma^{n-k}$.
In this way we proved existence of a polynomial
$P_k\in\Q[\gamma]$ of degree at most $n-k$ with the property that 
\[ d_k = P_k(\gamma),\]
where on the right-hand side the usual substitution \eqref{eq:gamma} is applied.

As the integral over a Young diagram $\lambda\subset\R^2$ can be written as a sum of the integrals 
over the individual boxes, it follows immediately that the following equality of functions on $\AllYoung$
holds true
\begin{equation}
\label{eq:s-versus-t}
 \Sfunct_n= \sum_{2\leq k\leq n} P_k(\gamma)\ \Tfunct_k   
\end{equation}
with the polynomials $P_2,\dots,P_n$ given by the above construction.
\emph{This shows that the right-hand side of \eqref{eq:filtrations-equal} is a subset of its
left-hand side, as required.}

\bigskip

We will show that for each $n\geq 2$
there exist polynomials $Q_2,\dots,Q_{n}\in\Q[\gamma]$ with the property that
\begin{equation}
\label{eq:t-versus-s}
 \Tfunct_n= \sum_{2\leq k\leq n} Q_k(\gamma)\ \Sfunct_k
\end{equation}
and, furthermore, the degree of $Q_k$ is bounded from above by $n-k$.
Our proof will use induction with respect to the variable $n$.
Equation \eqref{eq:s-versus-t} can be written in the form
\[ \Tfunct_n = \Sfunct_n - \sum_{2 \leq k \leq n-1} P_k(\gamma)\ \Tfunct_k.
\]
The inductive assertion can be applied to each of the expressions $\Tfunct_2,\dots,\Tfunct_{n-1}$ on
the right-hand side. It follows that $\Tfunct_n$ can be written, as required, in the form \eqref{eq:t-versus-s}
with the proper bounds on the degrees of the polynomials $Q_2,\dots,Q_n$.
This concludes the proof of the inductive step.

Equation \eqref{eq:t-versus-s} and the degree bounds on the polynomials $Q_2,\dots,Q_n$ imply 
that 
\emph{the left-hand side of \eqref{eq:filtrations-equal} is a subset of its
right-hand side, as required.}
\end{proof}

\subsection{Yet another basis of $\Poly$: free cumulants}

\begin{proposition}
\label{prop:generate-the-same=free}
The filtered unital algebra $\Poly$ of $\alpha$-polynomial functions
can be alternatively viewed as generated by the elements $\gamma,\Rfunct_2,\Rfunct_3,\dots$
with the degrees of the generators given by
\begin{equation}
\label{eq:filtration3}
\left\{
\begin{aligned}
\degg \gamma    &= 1,  \\  
\degg \Rfunct_n &= n \qquad \text{for $n\geq 2$}.
\end{aligned}
\right.
\end{equation}
\end{proposition}
\begin{proof}
In the light of \cref{prop:generate-the-same} it is enough to show that
$\Sfunct_2,\Sfunct_3,\dots$ and $\Rfunct_2,\Rfunct_3,\dots$ generate the same 
filtered algebra with the usual choice of the degrees of the generators
\eqref{eq:filtration2} and \eqref{eq:filtration3}. More specifically, we will show that
for each $n\geq 2$:
\begin{itemize}
   \item $\Rfunct_n$ can be expressed as a polynomial 
$F(\Sfunct_2,\dots,\Sfunct_n)$ 
for some multivariate polynomial $F(x_2,\dots,x_n)$, 
   \item $\Sfunct_n$ can be expressed as a polynomial 
$G(\Rfunct_2,\dots,\Rfunct_n)$ 
for some multivariate polynomial $G(x_2,\dots,x_n)$, 
\end{itemize}
and that $F$ and $G$ are homogeneous polynomials of degree $n$,
where the degrees of the variables are specified by $\degg x_i=i$. 
In order to do this we shall study the relationship between the anisotropic
Stanley polynomials (\cref{subsec:stanley-polynomials})
and their isotropic counterparts (\cref{sec:anisotropic-isotropic}).

\medskip

In the following we concentrate on the problem of finding the polynomial $F$.
Each anisotropic Stanley polynomial determines uniquely the corresponding function on the set $\AllYoung$
of Young diagrams.
Therefore our problem can be reformulated as expressing 
\emph{the anisotropic Stanley polynomial for $\Rfunct_n$} as a polynomial (which is equal to 
our wanted polynomial $F$)
in terms of 
\emph{the anisotropic Stanley polynomial for $\Sfunct_2$},
\emph{the anisotropic Stanley polynomial for $\Sfunct_3$},\dots.

\cref{eq:R-is-for-Ranley} (respectively, \cref{eq:S-is-for-Stanley}) 
shows equality
between the anisotropic Stanley polynomial for $\Rfunct_n$ (respectively, $\Sfunct_n$)
and the isotropic Stanley polynomial for $R_n$
(respectively, for $S_n$).
Since Stanley polynomial for a given function on $\AllYoung$ (if exists) is unique,
our goal is equivalent to proving the existence of a multivariate polynomial $F$ 
with the property that
\[ R_n(\lambda)= F\big(S_2(\lambda),S_3(\lambda),\dots\big)\]
holds for every $\lambda\in\AllYoung$.

The latter polynomial is known to exist and its exact form is known \cite[Eq.~(15)]{DolegaFeraySniady2008}.
The latter formula also implies the required degree bound.

\medskip

The problem of finding the polynomial $G$ 
is analogous with the roles of the quantities $\Sfunct_n$ and $\Rfunct_n$ interchanged.
In the last step of the proof one should use \cite[Eq.~(14)]{DolegaFeraySniady2008} instead.
\end{proof}

\section{Degree bounds of Dołęga and F\'eray}
\label{sec:vk-scaling}

\subsection{Kerov--Lassalle polynomial}
\label{sec:KVpolynomial}
If $\mu=(\mu_1,\mu_2,\dots)$ is a partition which does not contain any parts equal to $1$, we define
the function $\Rfunct_\mu$ by a multiplicative extension of free cumulants:
\[ 
\Rfunct_\mu  := \prod_{k} \Rfunct_{\mu_k}. 
\]

Dołęga and F\'eray \cite{DoleegaFeray2014} studied the \emph{Kerov--Lassalle polynomial}, 
i.e., the expansion of the Jack character $\Ch_\pi$ in 
the linear basis $(\Rfunct_\mu)$ with the coefficients which a priori belong to 
the field $\Q(A)$ of rational functions in the variable $A$.
They proved \cite[Corollary 3.5]{DoleegaFeray2014} that each such a coefficient
$\left[ \Rfunct_\mu\right] \Ch_\pi$ is, in fact, a polynomial in the variable $\gamma$.

\subsection{Degree bounds of Dołęga and F\'eray}
In the following we will consider partitions $\mu$ and $\pi$ for which the corresponding coefficient 
\begin{equation}
\label{eq:KerovLassalle}
  \left[ \Rfunct_\mu\right]  \Ch_\pi\in \Q[\gamma]   
\end{equation}
of Kerov--Lassalle polynomial is non-zero and we denote by
\[ \ddeg:=\ddeg \big( [\Rfunct_\mu] \Ch_\pi\big) \]
the degree of this polynomial.
Dołęga and F\'eray also proved \cite[Proposition 3.7, Proposition 3.10]{DoleegaFeray2014} 
that 
\begin{align}
\label{eq:restriction-linear-combination-first}
|\mu| + \ddeg &\leq |\pi|+\ell(\pi), \\
\label{eq:restriction-linear-combination}
 |\mu|-2\ell(\mu) +\ddeg &\leq |\pi|-\ell(\pi).\\
\intertext{
By taking the mean of the above inequalities we obtain}
\label{eq:restriction-linear-combination-conclusion}
|\mu|-\ell(\mu)+\ddeg &\leq |\pi|. 
\end{align}

\subsection{Degree of the Jack characters}

\begin{theorem}
\label{thm:degree-of-Jack-character}
Let $\pi$ be an arbitrary partition.

Then $\Ch_\pi\in\Poly$ is an $\alpha$-polynomial function of degree at most $|\pi|+\ell(\pi)$.   
\end{theorem}
\begin{proof}
This is a direct consequence of \eqref{eq:restriction-linear-combination-first}
combined with \cref{prop:generate-the-same=free}.
\end{proof}

\begin{proposition}
\label{prop:filtration-by-characters}
The family
\begin{equation}
\label{eq:all-your-base-are-belong-to-us}
 \big\{ \gamma^d \Ch_\pi : d\geq 0, \text{$\pi$ is a partition} \}    
\end{equation}
is a linear basis  of $\Poly$.

The usual filtration on $\Poly$ can be equivalently defined by
setting the degrees of elements of this linear basis as
\[ \degg \gamma^d \Ch_\pi = d + |\pi|+\ell(\pi).\] 
\end{proposition}
\begin{proof}
The linear independence of \eqref{eq:all-your-base-are-belong-to-us}
was proved (in a wider generality) by Dołęga and F\'eray \cite[Proposition 2.9]{DoleegaFeray2014}.

\smallskip

Our goal is to show the equality between vector spaces:
\begin{multline*} \operatorname{span}\big\{ \gamma^d \Ch_\pi     : d\geq 0, \ d+ |\pi|+\ell(\pi) \leq n \} = \\
   \operatorname{span}\big\{ \gamma^d \Rfunct_\mu : d\geq 0,\ d+ |\mu|           \leq n \},
\end{multline*}
where on the left-hand side the span runs over partitions $\pi$,
while on the right-hand side the span runs over partitions $\mu$ which do not contain any part equal to $1$.

The inclusion $\subseteq$ is a direct consequence of \eqref{eq:restriction-linear-combination-first}.
On the other hand, the cardinality of the base of the 
left-hand side is equal to the cardinality of the generating set of the right-hand side;
thus these finite-dimensional linear spaces are indeed equal.
\end{proof}

\subsection{Vershik--Kerov scaling}
\label{sec:vk-concrete}

Vershik and Kerov \cite{VershikKerov1981a}
proved a special case of the following result  
(namely in the case $A=1$ which 
corresponds to the usual characters $\Ch_\pi^{A=1}$ of the symmetric groups).
In the setup of Jack characters it was proved (in a slightly different formulation) by Lassalle 
\cite[Proposition~2]{Lassalle2008a}.
Yet another proof, based on the results of Dołęga and F\'eray can be found in the early
version of the current work \cite[Proposition 3.4]{Sniady2016a}.

\begin{proposition}
\label{prop:vershik-kerov-jack-character}
Let $\pi$ be a partition and $m\geq 1$ be an integer.

Then    
\begin{equation} 
\label{eq:vershik-kerov-polynomial-jack-character}
\AllYoung \ni(\lambda_1,\dots,\lambda_m) \mapsto \Ch_\pi(\lambda_1,\dots,\lambda_m)\in\Laurent   
\end{equation}
is a polynomial of degree $|\pi|$; its homogeneous top-degree part is equal to
\[ A^{|\pi|-\ell(\pi)}\ p_{\pi}(\lambda_1,\dots,\lambda_m),  \]
where $p_\pi$ is the power-sum symmetric polynomial.
\end{proposition}

\subsection{Degrees of Laurent polynomials}

\begin{definition}
\label{def:degree-Laurent}
For an integer $d$ we will say that a \emph{Laurent polynomial
\[ f=\sum_{k\in\Z} f_k A^k\in \Laurent \] is of degree at most $d$} if
$f_k=0$ holds for each integer $k>d$. 
\end{definition}

\begin{proposition}
\label{prop:degree-laurent}
For any partition $\pi$ and any Young diagram $\lambda$ the evaluation
$\Ch_\pi(\lambda)\in\Laurent$ is a Laurent polynomial of degree at most $|\pi|-\ell(\pi)$.   
\end{proposition}
\begin{proof}
From the very definition of free cumulants \eqref{eq:definition-free-cumulant} it follows 
that for any $n\geq 2$ and any Young diagram $\lambda$, the evaluation
$\Rfunct_n(\lambda)\in\Laurent$ is a Laurent polynomial of degree at most $n-2$.
By multiplicativity it follows that 
$\Rfunct_\mu(\lambda)\in\Laurent$ is a Laurent polynomial of degree at most $|\mu|-2\ell(\mu)$.
On the other hand, $\gamma\in\Laurent$ is a Laurent polynomial of degree $1$.
The bound \eqref{eq:restriction-linear-combination} concludes the proof.
\end{proof}

\section{The algebras $\functions$ and $\functions_\row$ of row functions}
\label{sec:row-functions}

\subsection{Row functions}

\begin{definition}
\label{def:row-functions}
Let a sequence (indexed by $r\geq 0$) of symmetric functions
$f_{r}:\N_0^r \rightarrow \Laurent$ 
be given, where
\[\N_0=\{0,1,2,\dots\}.\]
We assume that:
\begin{itemize}
   \item if\/ $0\in \{x_1,\dots,x_r\}$ then $f_{r}(x_1,\dots,x_r)=0$,

   \item $f_{r}=0$ except for finitely many values of $r$,

   \item there exists some integer $d\geq 0$ with the property that for all $r\geq 0$ and all $x_1,\dots,x_r\in\N_0$,
            the evaluation $f_r(x_1,\dots,x_r)\in\Laurent$ is a Laurent polynomial of degree at most $d-2r$.
\end{itemize}

We define a function $F\colon \AllYoung\rightarrow\Laurent$ given by
\begin{equation}
\label{eq:row-functions}
 F(\lambda)  := 
\sum_{r\geq 0} \sum_{i_1 < \dots < i_r} f_{r}(\lambda_{i_1},\dots,\lambda_{i_r}).   
\end{equation}
We will say that $F$ is a \emph{row function of degree at most $d$} 
and that $\left(f_r\right)$ is the \emph{kernel} of $F$.
The set of such row functions will be denoted by $\functions$.
\end{definition}

\subsection{Filtration on $\functions$}
\label{sec:filtration-on-functions}

\begin{proposition}
\label{lem:filtered-algebra}
The set $\functions$ of row functions equipped with the pointwise product and pointwise addition forms a
filtered algebra.
\end{proposition}
\begin{proof}
A product of two terms appearing on the right-hand side of \eqref{eq:row-functions}
is again of the same form:
\begin{multline}
\label{eq:product-convolution-kernels}
 \sum_{i_1 < \dots  < i_r} f_{r}(\lambda_{i_1},\dots,\lambda_{i_r}) \cdot
 \sum_{j_1 < \dots  < j_s} g_{s}(\lambda_{j_1},\dots,\lambda_{j_s})  \\ 
\shoveleft{=
 \sum_{0\leq t\leq r+s} \;
\sum_{k_1 < \dots  < k_t}} \\ 
\underbrace{\sum_{\substack{ i_1 < \dots  < i_r \\ j_1 < \dots  < j_s \\ 
\{k_1,\dots,k_t\}=\{i_1,\dots,i_r\} \cup \{j_1,\dots,j_s\} }}
f_{r}(\lambda_{i_1},\dots,\lambda_{i_r}) \ 
g_{s}(\lambda_{j_1},\dots,\lambda_{j_s})}_{h_{t}(\lambda_{k_1},\dots,\lambda_{k_t}):=}= \\
 \sum_{0\leq t\leq r+s} \;
\sum_{k_1 < \dots  < k_t} 
h_t(\lambda_{k_1},\dots,\lambda_{k_t})
\end{multline}
which shows that $\functions$ forms an algebra.

\bigskip

Assume that the two factors on the left-hand of \eqref{eq:product-convolution-kernels}
are row functions of degree at most, respectively, $d$ and $e$.
It follows that each value of $f_r$ is a Laurent polynomial of degree at most $d-2r$
and each value of $g_s$ is a Laurent polynomial of degree at most $e-2s$. 
Thus each value of $h_t$, the kernel defined by the curly bracket 
on the right-hand side of \eqref{eq:product-convolution-kernels},
is a Laurent polynomial of degree at most $d+e-2(r+s)\leq d+e-2t$ which shows that
the product is a row function of degree at most $d+e$, which concludes the proof that
$\functions$ is a filtered algebra.
\end{proof}

\begin{proposition}
\label{lem:polynimial-are-row}
Let $F\in\Poly$ be an $\alpha$-polynomial function of degree $d$.

Then $F\in\functions$ is also a row function of degree at most $d$.
\end{proposition}
\begin{proof}
By \cref{lem:filtered-algebra} it is enough to
prove the claim for the generators \eqref{eq:filtration}, 
i.e., to show for each $n\geq 2$ that $\Tfunct_n$ is a row function of degree at most $n$  
and that $\gamma$ is a row function of degree $1$. 
We will do it in the following.

\bigskip

For a Young diagram $\lambda$ we denote by $\lambda^T=(\lambda^T_1,\lambda^T_2,\dots)\in\AllYoung$ the transposed diagram.
The binomial formula implies that
\begin{multline} 
\label{eq:tfunct-as-a-row-function}
\Tfunct_n(\lambda) = 
(n-1)
\sum_{x\geq 1} \sum_{1\leq y\leq \lambda^T_x } \left( Ax - A^{-1} y\right)^{n-2} =\\ 
(n-1) \sum_{q\geq 0}  A^{n-2-2q} \binom{n-2}{q}   (-1)^q
\sum_{x\geq 1} x^{n-2-q} \sum_{1\leq y\leq \lambda^T_x}      y^q= \\ 
(n-1) \sum_{u\geq 1}  A^{n-2u}     \binom{n-2}{u-1}   (-1)^{u-1}
\sum_{x\geq 1} x^{n-1-u} \underbrace{\sum_{1\leq y\leq \lambda^T_x}  y^{u-1}}_{(\spadesuit)},
\end{multline}
where the last equality follows from the change of variables $u:=q+1$.  

The expression $(\spadesuit)$ marked above by the curly bracket, namely
\[ \N_0\ni s \mapsto \sum_{1\leq y\leq s} y^{u-1}, \] 
is a polynomial function of degree $u$, thus it can be written as a 
linear combination (with rational coefficients) of the family of polynomials 
$\N_0\ni s\mapsto \binom{s}{r}$ indexed by $r\in\{0,1,\dots,u\}$.
Notice that $\lambda^T_x$ is the number of rows of $\lambda$ which are bigger or equal than $x$,
thus $(\spadesuit)$ is a linear combination (with rational coefficients) of
\begin{equation}
\label{eq:binomials-form-a-basis} 
\binom{\lambda^T_x}{r} = \sum_{i_1 < \dots <  i_r}  [\lambda_{i_1} \geq x] \cdots [\lambda_{i_r} \geq x]
\end{equation}
over $r\in\{0,1,\dots,u\}$.
This shows that $\Tfunct_n$ is a row function.

\smallskip

Let $f_0,f_1,\dots$ be the corresponding kernel.
Equation \eqref{eq:tfunct-as-a-row-function} shows that each value 
$f_r(x_1,\dots,x_r)$ is a linear combination (with rational coefficients) of the expressions
\[  A^{n-2u}\ [x_1 \geq x] \cdots [x_r \geq x] \in\Laurent\]
over $u\geq 1$, over $x\geq 1$, and $r\leq u$. This Laurent polynomial is degree at most
$n-2u\leq n-2r$, which shows that $\Tfunct_n$ is a row function of degree at most $n$, as required.

\bigskip

We define
\[ f_r = \begin{cases}
            \gamma & \text{if $r=0$}, \\ 
            0      & \text{if $r\geq 1$}.
         \end{cases}
\]
Clearly, the corresponding row function $F$ fulfills $F(\lambda)=\gamma$ for any $\lambda\in\AllYoung$.
This shows that $\gamma$ is a row function of degree at most $1$, as required.
\end{proof}

\subsection{Separate product of row functions}

The set of row functions can be equipped with another product, which we will call
\emph{the separate product}.  
It is defined on the linear basis by declaring
\begin{multline*}
 \sum_{i_1 < \dots  < i_r} f(\lambda_{i_1},\dots,\lambda_{i_r}) \row
 \sum_{j_1 < \dots  < j_s} g(\lambda_{j_1},\dots,\lambda_{j_s}):=  \\ 
{
\sum_{k_1 < \dots  < k_{r+s}}} 
\underbrace{\sum_{\substack{ i_1 < \dots  < i_r \\ j_1 < \dots  < j_s \\ 
\{k_1,\dots,k_{r+s}\}=\{i_1,\dots,i_r\} \sqcup \{j_1,\dots,j_s\} }}
f(\lambda_{i_1},\dots,\lambda_{i_r}) 
g(\lambda_{j_1},\dots,\lambda_{j_s})}_{h_{r+s}(\lambda_{k_1},\dots,\lambda_{k_{r+m}})};
\end{multline*}
we extend this definition by bilinearity to general row functions.
This corresponds to selecting in \eqref{eq:product-convolution-kernels} only 
the summand for which $t=r+s$.
This product is well-defined since the kernel is
uniquely determined by the row function.

We define $\functions_\row$ as the linear space of row functions equipped with 
\emph{the separate product} $\row$.  
It is a very simple exercise to check that the algebra $\functions_\row$ 
equipped with the notion of degree from \cref{def:row-functions} becomes a 
\emph{filtered algebra}.

\subsection{Summary: four filtered algebras. Cumulants}
So far we have introduced four filtered algebras of functions on $\AllYoung$.
They can be summarized by the following commutative diagram.
\begin{equation}
\label{eq:commutative-diagram-B2}
\begin{tikzpicture}[node distance=3cm,auto, baseline=-14ex]
  \node (A) {$\Poly_\disjoint$};
  \node (B) [right of=A] {$\Poly$};
  \node (B1) [right of=B,node distance=1.2cm] {};
  \node (B2) [below of=B1,node distance=1.2cm] {$\functions$};

  \node (C) [below of=B2]{$\functions_\row$};

  \draw[->] (A) to node {$\id$} (B);
  \draw[->] (B) to node {$\id$} (B2);

  \draw[->] (B2) to node {$\id$} (C);
  \draw[->] (A) to node [swap]{$\id$} (C);
\end{tikzpicture}
\end{equation}
Each of the arrows is a unital linear map given by the identity (inclusion).

Each of the arrows is compatible with the corresponding filtrations in the sense that
the degree of the image of any element $x$ is bounded from above by the degree of $x$ itself
(for the horizontal and the vertical arrow this follows from the fact
that the corresponding pairs of algebras are isomorphic as \emph{filtered vector spaces};
for both diagonal arrows this corresponds to \cref{lem:polynimial-are-row}).

The short diagonal arrow is an inclusion of algebras.

For some arrows in this diagram we will investigate the corresponding cumulants.
We recall that for 
the horizontal arrow $\mathbb{E}=\id \colon \Poly_\disjoint \to \Poly$ the corresponding cumulant
is denoted by $\kumuDisjointPoint$.
For the long diagonal arrow $\mathbb{E}=\id \colon \Poly_\disjoint \to \functions_\row$ 
the corresponding cumulant
will be denoted by $\kumuDisjointRow$.
For the vertical arrow $\mathbb{E}=\id \colon \functions \to \functions_\row$ 
the corresponding cumulant will be denoted by $\kumuPointRow$.

\section{How to show that an $\alpha$-polynomial function is of small degree?}
\label{sec:how-to-prove-small}

\subsection{The difference operator}

\begin{definition}
\label{def:difference}
If $F=F(\lambda_1,\dots,\lambda_\ell)$ is a function of $\ell$ arguments and $1\leq j\leq \ell$, 
we define a new function $\Delta_{\lambda_j} F$ by
\begin{multline*}
\left( \Delta_{\lambda_j} F \right) (\lambda_1,\dots,\lambda_\ell):= \\ 
F(\lambda_1,\dots,\lambda_{j-1},\lambda_j+1,\lambda_{j+1},\dots,\lambda_\ell)-
F(\lambda_1,\dots,\lambda_\ell).
\end{multline*}
We call $\Delta_{\lambda_j}$ a \emph{difference operator}.
\end{definition}

\subsection{Extension of the domain of functions on $\AllYoung$}
\label{sec:extension}

Let $F$ be a function on the set of Young diagrams.
Such a function can be viewed as a function $F(\lambda_1,\dots,\lambda_\ell)$ defined 
for all non-negative integers $\lambda_1\geq \dots\geq \lambda_\ell$.
We will extend its domain, as follows.

\begin{definition} 
\label{def:extension}
If $(\xi_1,\dots,\xi_\ell)$ is an arbitrary sequence of non-negative integers, we denote
\[ F^{\sym}(\xi_1,\dots,\xi_\ell):= F(\lambda_1,\dots,\lambda_\ell), \]
where $(\lambda_1,\dots,\lambda_\ell)\in \AllYoung$ is the sequence $(\xi_1,\dots,\xi_\ell)$ 
sorted in the reverse order $\lambda_1\geq \dots \geq \lambda_\ell$.
In this way $F^{\sym}(\xi_1,\dots,\xi_\ell)$ is a symmetric function of its arguments.
\end{definition}

Note that the definition \eqref{eq:row-functions} of a row function 
does not require any modifications in order to give rise to such an extension.
For this reason, if $F$ is a row function, we will identify it with its extension
$F^{\sym}$.

\subsection{The difference operator vanishes on elements of small degree}

\newcommand{\variableK}{k}

\begin{lemma}
\label{coro:small-degree-killed}   
Let $d\geq 1$ be an integer and assume that $F\in\functions$ is of degree at most $d-1$.

Then for each integer $\variableK\geq 0$ and each Young diagram $\lambda=(\lambda_1,\lambda_2,\dots)$ 
\[ [A^{d-2\variableK}] \Delta_{\lambda_1} \cdots \Delta_{\lambda_\variableK} F^{\sym}(\lambda_1,\lambda_2,\dots)  =0.\]
\end{lemma}
\begin{proof}
We know that
$F$ is a sum of the functions of the form
\[
 \sum_{i_1 < \dots < i_l} f_{l}(\lambda_{i_1},\dots,\lambda_{i_l}),   
\]
over $l\geq 0$ and each value of $f_l$ is a Laurent polynomial of degree at most $d-1-2l$.

Clearly, 
\[ \Delta_{\lambda_1} \dots \Delta_{\lambda_\variableK} f_{l}(\lambda_{i_1},\dots,\lambda_{i_l}) = 0 
   \qquad \text{if } \{1,\dots,\variableK\}\not\subseteq \{i_1,\dots,i_l\}
\]
thus  
\begin{equation}
\label{eq:tralala}
[A^{d-2\variableK}] \Delta_{\lambda_1} \dots \Delta_{\lambda_\variableK} 
\sum_{i_1 < \dots < i_l} f_{l}(\lambda_{i_1},\dots,\lambda_{i_l})       
\end{equation}
vanishes if $l<\variableK$.

On the other hand, for $l\geq \variableK$, the expression $f_l(\lambda)$ 
is a Laurent polynomial of degree at most $d-1-2l\leq d-1-2\variableK $
thus \eqref{eq:tralala} vanishes as well.
\end{proof}

\subsection{What happens to an $\alpha$-polynomial function if we view it as a row function?}
\label{sec:top-degree-row-function}
\begin{definition}
Let $d\geq 0$ be an integer and 
let $F$ given by \eqref{eq:row-functions} be a row function of degree at most $d$. 
We define its \emph{top-degree part} as: 
\begin{equation}
\label{eq:top-degree-row-function}   
 F^{\ttop}(\lambda)  := 
\sum_{r \geq 0} 
A^{d-2r} 
\sum_{i_1 < \dots < i_r} 
\left[A^{d-2r} \right] f_{r}(\lambda_{i_1},\dots,\lambda_{i_r}). 
\end{equation}
We will also say that the summand corresponding to a specified value of $r$
(i.e., the $r$-fold sum over the rows) \emph{has rank $r$}.
\end{definition}

\begin{lemma}
\label{lem:minimal-rank}
Let $r\geq 0$ and $d\geq 2r$ be integers and 
let $\poly(c_1,\dots,c_r)$ be a symmetric polynomial in its $r$ arguments with the coefficients 
in the polynomial ring $\Q[\gamma]$. We assume that $p$, viewed as a polynomial in $\gamma,c_1,\dots,c_r$,
is a homogeneous polynomial of degree $d-2r$.

We consider the row function of degree at most $d$ given by
\[
F(\lambda)= \sum_{\Box_1,\dots,\Box_r\in \lambda}
\poly(c_1,\dots,c_r),  
\]
where
\[c_1:=\acontent(\Box_1),\quad \dots, \quad c_r:=\acontent(\Box_r).
\]

Then each non-zero summand in \eqref{eq:top-degree-row-function} 
for the top-degree part of $F$ has rank at least $r$;
the summand with the rank equal to $r$ is given by
\begin{equation}
\label{eq:minimal-rank}
\lambda\mapsto A^{d-2r} r! \sum_{i_1<\cdots<i_r} 
\sum_{1\leq x_1 \leq \lambda_{i_1}} \cdots \sum_{1\leq x_r \leq \lambda_{i_r}}
 \poly(x_1,\dots,x_r) \bigg|_{\gamma=-1},
\end{equation}
where on the right-hand side we consider the evaluation of the polynomial $\poly$ for $\gamma=-1$.
\end{lemma}
\begin{proof}
We start with a general investigation of the top-degree part of various row functions.
The proof of \cref{lem:polynimial-are-row} shows that
\[ \gamma^{\ttop}=-A \]
consists of a single summand of rank $0$.

\medskip

The extraction of the top-degree part of the row function \eqref{eq:tfunct-as-a-row-function}
corresponds to the restriction to the summand $r=u\geq 1$ 
(with the notations of \eqref{eq:binomials-form-a-basis}). 
For this reason $\Tfunct_n^{\ttop}(\lambda)$ involves only the summands with the rank at least $1$.
Furthermore, the term of rank $1$ is given explicitly in the following expansion:
\[\Tfunct_n^{\ttop}(\lambda) = A^{n-2} \sum_i \sum_{1\leq x\leq \lambda_i} (n-1) x^{n-2} +
 (\text{summands of rank at least $2$}).
\]

\medskip

We shall revisit \eqref{eq:product-convolution-kernels} in order to investigate the top-degree part
of a product of two row functions.
The summands on the right-hand side do not contribute to the top-degree part unless $t=r+s$ and 
\[ \{k_1,\dots,k_t\}=\{i_1,\dots,i_r\} \sqcup \{j_1,\dots,j_s\}  \]
is a decomposition into disjoint sets.
This shows that the top-degree part of a product involves only the summands with the rank at least 
the sum of the ranks of the original factors.
Furthermore, the top-degree summand of this minimal rank is given very explicitly.

\bigskip

We come back to the proof of the Lemma.
Assume for simplicity that the polynomial $\poly$ is a monomial.
In this case $F$ is --- up to simple numerical factors --- a product of some power of
$\gamma$ and of exactly $r$ factors of the form $\Tfunct_n$ over $n\geq 2$. 
The above discussion shows that $F^{\ttop}$ involves only the summands of rank 
at least $r$, and gives a concrete formula for the summand of rank $r$.
It is easy to check that it is the formula \eqref{eq:minimal-rank}
in which the monomial $\poly$ has been replaced by its symmetrization.

By linearity, this result remains true for a general polynomial $\poly$.
In particular, if $\poly$ is already symmetric, formula \eqref{eq:minimal-rank} holds true
without modifications.  
\end{proof}

\subsection{Multivariate polynomials having lots of zeros}
\label{sec:preparation-proof-key-lemma-start}

The final ingredient in the proof of \cref{lem:keylemma}
is the following result which shows that
if a multivariate polynomial has a specific set of zeros, it must be identically equal to zero.

\begin{lemma}
\label{lem:lots-of-zeros}
Let $k\geq 0$ and $d\geq 0$ be integers.

\begin{itemize}
\item
Let $\poly(x_1,\dots,x_k)\in\Q[x_1,\dots,x_k]$ be a polynomial of degree at most~$d$.
Assume that
\begin{equation}
\label{eq:let-be-zero}
\poly(x_1,\dots,x_k)=0 
\end{equation}
holds true for all integers $x_1,\dots,x_k\geq 1$  such that 
\begin{equation}
\label{eq:sum-of-xes}
x_1+\dots+x_k\leq d+k.
\end{equation}

Then $\poly=0$.

\item
Let $\poly(x_1,\dots,x_k)\in\Q[x_1,\dots,x_k]$ be a \emph{symmetric} polynomial of degree at most $d$.
Assume that \eqref{eq:let-be-zero} holds true
for all integers $x_1\geq \dots \geq x_k\geq 1$ such that \eqref{eq:sum-of-xes} holds true.

Then $\poly=0$.
\end{itemize}
\end{lemma}
\begin{proof}
We will show the first part of the claim by induction over $k$.

\emph{The case $k=0$.} In this extreme case the empty sequence fulfills the assumption
\eqref{eq:sum-of-xes}; the resulting \eqref{eq:let-be-zero} gives the desired claim. 

\bigskip

\emph{The case $k=1$.} It follows that $\poly(x_1)\in\Q[x_1]$ is a polynomial of degree at most $d$
which has at least $d+1$ zeros; it follows that $\poly=0$.

\bigskip

\emph{We consider the case $k\geq 2$ and we assume that the first part of the lemma is true 
for $k':=k-1$.}
The polynomial $\poly$ can be written in the form
\[
 \poly = \sum_{0\leq r\leq d} 
       \poly_{r}(x_1,\dots,x_{k-1})\ \underbrace{(x_k-1) (x_k-2) \dots (x_k-r)}_{\text{$r$ factors}}, 
\] 
where $\poly_{r}\in\Q[x_1,\dots,x_{k-1}]$ is a polynomial of degree at most $d-r$.

We will show by a nested induction over the variable $r$ that $\poly_r=0$ for each $0\leq r \leq d$.
Assume that $\poly_l=0$ for each $l<r$. It follows that
\[  \poly(x_1,\dots,x_{k-1},r+1)=r!\ \poly_r(x_1,\dots,x_{k-1}),\]
thus 
\[ \poly_r(x_1,\dots,x_{k-1})=0 \]
holds true for all integers $x_1,\dots,x_{k-1}\geq 1$ such that 
\[x_1+\dots+x_{k-1}\leq (d-r)+(k-1).\]
It follows that $\poly_r$ fulfills the condition \eqref{eq:let-be-zero} for $d':=d-r$ and $k':=k-1$ thus 
the inductive hypothesis (with respect to the variable $k$) can be applied. 
It follows that $\poly_r=0$.
This concludes the proof of the inductive step over the variable $r$.

\bigskip

The second part of the lemma is a direct consequence of the first part.
\end{proof}

\subsection{The key tool}

\newcommand{\rrr}{r}

The assumptions of the following theorem have been modeled after the properties of the Jack characters;
in particular $F:=\Ch_\pi$ fulfills the assumptions 
--- except for the assumption \ref{zero:topdegree} ---
for $n:=|\pi|$ and $\rrr:=\ell(\pi)$.

\begin{theorem}[The key tool]
\label{lem:keylemma}
Let integers $n\geq 1$ and $\rrr\geq 1$ be given.
Assume that: 
\begin{enumerate}[label=(Z\arabic*)]
 \item
\label{zero:initial-bound}
$F\in\Poly$ is of degree at most $n+\rrr$;

 \item \label{zero:topdegree} 
we assume that
for each $m\geq 1$ the polynomial in $m$ variables
\[ \AllYoung \ni(\lambda_1,\dots,\lambda_m) \mapsto F(\lambda_1,\dots,\lambda_m) \]
is of degree at most $n-1$;

 \item 
\label{zero:vanishing}
the equality
\begin{equation}
\label{eq:top-key-equation} 
[A^{n+\rrr-2k}] \Delta_{\lambda_1} \cdots \Delta_{\lambda_k} F^{\sym}(\lambda_1,\dots,\lambda_k) = 0  
\end{equation}
holds true for the following values of\/ $k$ and $\lambda$:
\begin{itemize}
 \item $k=\rrr$ and $\lambda=(\lambda_1,\dots,\lambda_\rrr)\in\AllYoung$ with at most $\rrr$ rows is such that
$|\lambda|\leq n+\rrr-2k-1$;

 \item $k>\rrr$ and $\lambda=(\lambda_1,\dots,\lambda_k)\in\AllYoung$ with at most $k$ rows is such that
$|\lambda|\leq n+\rrr-2k$;
\end{itemize}

 \item 
\label{zero:laurent}
for each $\lambda\in\AllYoung$, the Laurent polynomial $F(\lambda)\in\Laurent$ 
is of degree at most $n-\rrr+1$.

\end{enumerate}

\medskip

Then $F\in\Poly$ is of degree at most $n+\rrr-1$. 

\bigskip

\emph{Alternative version:} the result remains valid 
for all integers $n\geq 0$ and $\rrr\geq 1$ 
if the assumption \ref{zero:topdegree} is removed and  
the condition \ref{zero:vanishing} is replaced by the following one:
\begin{enumerate}[label=(Z3a)]
 \item 
\label{zero:vanishing-B}
the equality \eqref{eq:top-key-equation} 
holds true for all $k\geq \rrr$ and $\lambda=(\lambda_1,\dots,\lambda_k)\in\AllYoung$ with at most $k$ rows
such that $|\lambda|\leq n+\rrr-2k$.

\end{enumerate}

\end{theorem}
\begin{proof}
The function $F\in\Poly$ can be written in the form
\begin{equation}
\label{eq:polynomial}
F(\lambda)= \sum_{k\geq 0}
\underbrace{\sum_{\Box_1,\dots,\Box_k\in
\lambda}
\poly_k(c_1,\dots,c_k)}_{F_k(\lambda)},   
\end{equation}
where
$c_i:=\acontent(\Box_i)$ 
and where $\poly_k$ is a symmetric polynomial in its $k$ arguments with the coefficients 
in the polynomial ring $\Q[\gamma]$.
Furthermore, the degree bound \ref{zero:initial-bound} implies that 
$\poly_k$ --- this time viewed as a polynomial in $k+1$ variables: $\gamma, c_1,\dots,c_k$ --- 
is a polynomial of degree at most $n+\rrr-2k$.
Let $\poly_k^{\ttop}$ denote its homogeneous part of degree $n+\rrr-2k$.

\bigskip

\emph{The statement of the theorem would follow if we can show that $\poly_k$ is, in fact, of degree at most
$n+\rrr-2k-1$ or, equivalently, $\poly_k^{\ttop}=0$.
We will show this claim by induction over $k\geq 0$; 
assume that $\poly_{m}$ is of degree at most
$n+\rrr-2m-1$ for each $m<k$.}

The curly bracket in \eqref{eq:polynomial} serves as the definition of the functions 
$F_0,F_1,\dots$ on the set $\AllYoung$ of Young diagrams. In the following we will investigate the quantity
--- which is analogous to \eqref{eq:top-key-equation} --- given by
\begin{equation}
\label{eq:mysterious-laurent}
 [A^{n+\rrr-2k}] \Delta_{\lambda_1} \cdots \Delta_{\lambda_k} F_m^{\sym}(\lambda_1,\dots,\lambda_k)  
\end{equation}
for $(\lambda_1,\dots,\lambda_k)\in\AllYoung$ and various choices of the variable $m\geq 0$.

\begin{itemize}[itemsep=2ex]
\item \emph{The case $m>k$.}
Firstly, observe that $\gamma$ as well as $\acontent(\Box)$ (for any box $\Box\in\N^2$), 
viewed as Laurent polynomials in the variable $A$, are of degree at most $1$, thus
$F_m(\lambda)$ is a Laurent polynomial of degree 
bounded from above by the degree of the polynomial $p_m$ which is
at most $n+\rrr-2m<n+\rrr-2k$.
It follows that for $m>k$ we have that 
\[ [A^{n+\rrr-2k}]  F_m^{\sym}(\lambda_1,\dots,\lambda_k) = 0 \]
hence, a fortiori,
the quantity 
\eqref{eq:mysterious-laurent} vanishes as well.

\item \emph{The case $m<k$.} 
From the inductive hypothesis, $F_m\in\Poly$ is an $\alpha$-polynomial function of degree at most $n+\rrr-1$.
We apply \cref{coro:small-degree-killed} for $d:=n+\rrr$.
In this way we proved that for $m<k$ the quantity \eqref{eq:mysterious-laurent} vanishes.

\item 
\emph{The case $m=k$.}
We will revisit the case $m<k$ considered above and discuss the changes in the reasoning.
We study now the function $F_k$. 
As the induction hypothesis cannot be applied,
the assumption of \cref{coro:small-degree-killed} is not satisfied for 
$F:=F_k$ and $d:=n+\rrr$
and we have to revisit its proof. We shall do it in the following.

The upper bound on the degree of the Laurent polynomial $f_l(\lambda)$ is weaker, given by 
$n+\rrr-2l$.
One can easily see that the only case in which \eqref{eq:tralala} could possibly be non-zero is for $l=k=m$,
thus
\begin{equation}
\label{eq:po-bezsennej-nocy-1}
\eqref{eq:mysterious-laurent} = 
\Delta_{\lambda_1} \cdots \Delta_{\lambda_k} 
\underbrace{[A^{n+\rrr-2k}]
\sum_{i_1 < \dots < i_k} f_{k}(\lambda_{i_1},\dots,\lambda_{i_k})}_{(\diamondsuit)}.
\end{equation}
Clearly, the expression $(\diamondsuit)$ is directly related to the top-degree part of $F_k\in\functions$ of rank $k$.
The latter can be computed explicitly by \cref{lem:minimal-rank} (applied for $r:=k$).
Thus
\begin{multline*}  
\eqref{eq:po-bezsennej-nocy-1}= \\ 
\Delta_{\lambda_1} \cdots \Delta_{\lambda_k}
k!
\sum_{i_1 < \dots < i_k} 
\sum_{1\leq x_1\leq \lambda_{i_1}} \cdots \sum_{1\leq x_k\leq \lambda_{i_k}}
\left. \poly^{\ttop}_{k}(x_{i_1},\dots,x_{i_k}) \right|_{\gamma:=-1} =
 \\
k!\; \left. \poly_k^{\ttop}(\lambda_1+1,\dots,\lambda_k+1) \right|_{\gamma:=-1}.
\end{multline*}
\end{itemize}

\bigskip

\emph{This finishes our discussion of the quantity \eqref{eq:mysterious-laurent} for various choices of the variable $m$.
The conclusion is that
\begin{multline}
 \label{eq:difference-gives-something}  
 [A^{n+\rrr-2k}] \Delta_{\lambda_1} \cdots \Delta_{\lambda_k} F^{\sym}(\lambda_1,\dots,\lambda_k) = \\
k!\; \left. \poly_k^{\ttop}(\lambda_1+1,\dots,\lambda_k+1) \right|_{\gamma:=-1}
\end{multline}
holds true for an arbitrary Young diagram $(\lambda_1,\dots,\lambda_k)\in\AllYoung$ with at most $k$ rows.
We will use this equality to finish the proof of the inductive step over the variable $k$.}

\medskip

Notice that the polynomial 
\begin{equation} 
\label{eq:top-polynomial-top-model}
\left. \poly_k^{\ttop}(c_1,\dots,c_k) \right|_{\gamma:=-1}
\end{equation}
in which we used the substitution $\gamma:=-1$ is an 
(inhomogeneous) symmetric polynomial in the indeterminates $c_1,\dots,c_k$ of degree at most $n+\rrr-2k$.
\emph{In order to achieve our ultimate goal and show that 
the homogeneous polynomial $\poly_k^{\ttop}$ is equal to zero 
it is enough to show that the inhomogeneous
polynomial \eqref{eq:top-polynomial-top-model} is equal to zero.
We shall do it in the following.}

\begin{itemize}[itemsep=2ex]
   \item 
\emph{Firstly, consider the case $k<\rrr$.} Assumption \ref{zero:laurent} 
on the degrees of Laurent polynomials implies 
that 
\[ [A^{n+\rrr-2k}]  F^{\sym}(\lambda_1,\dots,\lambda_k)=0 \]
holds true for any Young diagram with at most $k$ rows. Thus
the left-hand side of \eqref{eq:difference-gives-something} is constantly equal to zero.  
\cref{lem:lots-of-zeros} can be applied to the polynomial \eqref{eq:top-polynomial-top-model}; 
it follows that \eqref{eq:top-polynomial-top-model}
is the zero polynomial as required.

\item
\emph{Secondly, consider the case $k>\rrr$.} Assumption \ref{zero:vanishing} implies that the left-hand side of 
\eqref{eq:difference-gives-something} is equal to zero for $|\lambda|\leq n+\rrr-2 k$,
therefore 
$\left. \poly_k^{\ttop}(c_1,\dots,c_k) \right|_{\gamma:=-1}=0$ 
for all integers $c_1,\dots,c_k$ such that $c_1\geq \cdots \geq c_k \geq 1$ and $c_1+\dots+c_k\leq n+\rrr-k$.
Thus \cref{lem:lots-of-zeros} implies that \eqref{eq:top-polynomial-top-model}
is the zero polynomial, as required.

The same proof works for the alternative assumption \ref{zero:vanishing-B} in the case $k\geq \rrr$.

\item
\emph{Finally, consider the case $k=\rrr$.}
Note that this case for the alternative assumption \ref{zero:vanishing-B} was already considered above and
the following discussion is not applicable.

We consider the set of Young diagrams $\lambda=(\lambda_1,\dots,\lambda_k)$ with the property that 
$\lambda_1>\cdots>\lambda_k$.
For any Young diagram in this set
\begin{equation}
\label{eq:difference-hard-and-soft}
 \Delta_{\lambda_1} \cdots \Delta_{\lambda_k} F^{\sym}(\lambda_1,\dots,\lambda_k) =
\Delta_{\lambda_1} \cdots \Delta_{\lambda_k} F(\lambda_1,\dots,\lambda_k)
\end{equation}
and the extension of the domain of $F$ by symmetrization is not necessary.
We can view $F$ as a polynomial 
in the indeterminates $\lambda_1,\dots,\lambda_k$. 
One can easily show that if two polynomials in the variables $\lambda_1,\dots,\lambda_k$ 
coincide on the above set of Young diagrams
then they must be equal; \eqref{eq:difference-gives-something} and \eqref{eq:difference-hard-and-soft}
imply therefore the following equality between
polynomials:
\[ [A^{n+\rrr-2k}] \Delta_{\lambda_1} \cdots \Delta_{\lambda_k} F(\lambda_1,\dots,\lambda_k) = \\
k!\; \left. \poly_k^{\ttop}(\lambda_1+1,\dots,\lambda_k+1) \right|_{\gamma:=-1}.\]

Each application of a difference operator decreases the degree of a polynomial by one.
Together with assumption \ref{zero:topdegree} this implies that the left-hand side
is as a polynomial in $\lambda_1,\dots,\lambda_k$ 
of degree at most $n-\rrr-1$, so $p_k^{\ttop}\big|_{\gamma:=-1}$ must be also of degree at most $n-\rrr-1$.

Assumption \ref{zero:vanishing} implies that the left-hand side of 
\eqref{eq:difference-gives-something} is equal to zero for $|\lambda|\leq n-\rrr-1$; 
thus \cref{lem:lots-of-zeros} can be applied again to show that \eqref{eq:top-polynomial-top-model}
is the zero polynomial, as required.
\end{itemize}

\emph{This concludes the proof of the inductive step over the variable $k$.}
\end{proof}

\section{Approximate factorization property for the vertical arrow}
\label{sec:afp-vertical}

We recall that the conditional cumulants which correspond to the vertical arrow
in \eqref{eq:commutative-diagram-B2} between $\functions$ and $\functions_\row$
are denoted by $\kumuPointRow$.

\subsection{Closed formula for the cumulants $\kumuPointRow$}

Our goal in this section will be to find a closed formula
for the cumulant $\kumuPointRow(x_1,\dots,x_n)$ 
for $x_1,\dots,x_n\in\functions$.
By linearity of cumulants we may 
assume that for each value of the index $i$, the function $x_i\in \functions$ has the form
\begin{equation}
\label{eq:what-is-x}
 x_i(\lambda)= \sum_{j^{(i)}_1 < \dots  < j^{(i)}_{m(i)}} 
g_i\left(\lambda_{j^{(i)}_1},\dots,\lambda_{j^{(i)}_{m(i)}}\right).   
\end{equation}
It follows that the pointwise product of functions is given by
\begin{multline}
\label{eq:monster-product}
 ( x_1 \cdots x_n) (\lambda) = \\
\sum_{j^{(1)}_1 < \dots  < j^{(1)}_{m(1)}} \cdots \sum_{j^{(n)}_1 < \dots  < j^{(n)}_{m(n)}}  
\prod_{1\leq i\leq n} g_i\left(\lambda_{j^{(i)}_1},\dots,\lambda_{j^{(i)}_{m(i)}}\right).
\end{multline}

Let us fix some summand on the right-hand side. 
We denote
\begin{equation} 
\label{eq:sets-J}
J^{(i)}:=\{ j^{(i)}_1,  \dots  ,j^{(i)}_{m(i)}\}    
\end{equation}
and consider the graph $\mathcal{G}$
with the vertex set $[n]=\{1,2,\dots,n\}$ the elements of which correspond to the factors;
we draw an edge between the vertices $a$ and $b$ if the sets
$J^{(a)}$ and $J^{(b)}$
are not disjoint. The connected components of the graph $\mathcal{G}$
define a certain partition of the set $[n]$.

It follows that the right-hand side of \eqref{eq:monster-product}
can be written in the form
\begin{equation}
\label{eq:moment-cumulant-connected}
 x_1 \cdots x_n  = \sum_\nu  \prod_{b\in\nu} \widetilde{\kumuPointRow}(x_i : i\in b),    
\end{equation}
where the sum runs over all set-partitions $\nu$ of the set $[n]$ and
the product runs over the blocks of $\nu$. 
In the above formula $\widetilde{\kumuPointRow}$ denotes the contribution of a prescribed
connected component of the graph $\mathcal{G}$, i.e.
\newcommand{\rrrr}{l}
\begin{multline}
\label{eq:cumulants-point-row-explicit}
 \left( \widetilde{\kumuPointRow}(x_{i_1}, \dots, x_{i_\rrrr})\right)
(\lambda_1,\lambda_2,\dots) :=   \\
\sum_{j^{(i_1)}_1 < \dots  < j^{(i_1)}_{m(i_1)}} \cdots \sum_{j^{(i_\rrrr)}_1 < \dots  < j^{(i_\rrrr)}_{m(i_\rrrr)}} 
\prod_{1\leq k\leq \rrrr}
g_{i_k}\left(\lambda_{j^{(i_k)}_1},\dots,\lambda_{j^{(i_k)}_{m(i_k)}}\right)
\end{multline}
is defined as the sum over such choices of the indices that 
the restriction of the above graph $\mathcal{G}$ to the vertex set 
$\{i_1,\dots,i_\rrrr\}$ is a connected graph.

\medskip

It is a simple classical result (\emph{`the moment-cumulant formula'})
that the relation \eqref{eq:what-is-cumulant} between the cumulants
and moments can be inverted;
in our current setup this yields
\begin{equation}
\label{eq:moment-cumulant-connected-2}
 x_1 \cdots x_n  = \sum_\nu  \prod_{b\in\nu} {\kumuPointRow}(x_i : i\in b),
\end{equation}
where the sum runs over set-partitions of $[n]$.
Comparison of \eqref{eq:moment-cumulant-connected} with \eqref{eq:moment-cumulant-connected-2}
shows that the 
quantities $\widetilde{\kumuPointRow}$ fulfill the same recurrence relations
as the cumulants $\kumuPointRow$.
Since the system of equations \eqref{eq:moment-cumulant-connected-2} has the unique solution,
it follows that 
\[ {\kumuPointRow}=\widetilde{\kumuPointRow} \]
thus \eqref{eq:cumulants-point-row-explicit} gives an explicit formula for the latter cumulants.

In this way we proved the following result.
\begin{lemma}
\label{lem:cumulants-concretely}
If $x_i\in\functions$ are given by \eqref{eq:what-is-x}
then the corresponding cumulant
$\kumuPointRow(x_{i_1}, \dots, x_{i_r})$ is given by
the right-hand side of
\eqref{eq:cumulants-point-row-explicit}.   
\end{lemma}

In the following lemma we shall use the notations from the above proof.
Also, for a graph $\mathcal{G}$ we denote by $c(\mathcal{G})$  the number of its connected components.
\begin{lemma}
\label{lem:connected-components}
Let a family $J^{(1)},\dots,J^{(n)}$ of sets \eqref{eq:sets-J} be given.
\begin{enumerate}
   \item 
Assume that $\mathcal{G}'$ is a subgraph of $\mathcal{G}$ with the same vertex set $[n]$.
Then
\[ \sum_C  \left| \bigcup_{a\in C} J^{(a)}\right| \leq m(1)+\cdots+m(n)+ c(\mathcal{G}') -n, \]
where the first sum on the left-hand side runs over the connected components of $\mathcal{G}'$.

\item 
\[  
\left| \bigcup_{1\leq a\leq n} J^{(a)}  \right|
\leq  m(1)+\cdots+m(n)+ c(\mathcal{G})-n\]
\end{enumerate}
\end{lemma}
\begin{proof}
The proof of the first part of the lemma is a simple induction with respect to
the number of the edges of the graph $\mathcal{G}'$ based on the inclusion-exclusion principle
$|A\cup B|=|A|+|B|-|A\cap B|$.

The second part follows from the first part by setting $\mathcal{G}':=\mathcal{G}$.
\end{proof}

\subsection{The vertical arrow has approximate factorization property}

\begin{proposition}
\label{lem:vertical-arrow-R}
The vertical arrow from \eqref{eq:commutative-diagram-B2} has approximate factorization property.  
\end{proposition}
\begin{proof}
Let $x_1,\dots,x_n\in\functions$ be of the form \eqref{eq:what-is-x}.

\cref{lem:cumulants-concretely} 
gives explicitly the kernel $(f_r)$ for the cumulant
\[ \kumuPointRow(x_{1}, \dots, x_{n})
   = 
\sum_{r\geq 0} \sum_{i_1 < \dots < i_r} f_{r}(\lambda_{i_1},\dots,\lambda_{i_r}).   
\]
More specifically, the summand on the right-hand side
for some specified value of $r$ corresponds to the summands on the right-hand side of
\eqref{eq:cumulants-point-row-explicit} for which
\[ | J^{(1)} \cup \cdots \cup J^{(n)} | = r.\]

We keep notations from \eqref{eq:what-is-x}.
Assume that $x_i\in\functions$ is of degree at most $d_i$;
in other words we assume that the corresponding kernel
$g_i$ takes only values in Laurent polynomials of degree at most
$d_i - 2 m(i)$. It follows that the function $f_r$ takes values in Laurent
polynomials of degree at most
\[ \sum_{1\leq i\leq n} d_i - 2 m(i).\]

On the other hand, the second part of \cref{lem:connected-components} shows that non-zero
contribution can be obtained only for the values of $r$ which fulfill the bound
\[r= | J^{(1)} \cup \cdots \cup J^{(n)} |  \leq  m(1)+\cdots+m(n)+ 1-n.\]

It follows that  $\kumuPointRow(x_{1}, \dots, x_{n})$ is a row-function
of degree at most
\[ \left( \sum_{1\leq i\leq n} d_i - 2 m(i)\right) + 2 r \leq 
\left( \sum_{1\leq i\leq n} d_i \right) - 2 (n-1), \]
which concludes the proof.
\end{proof}

\section{Cumulants for the long diagonal arrow}
\label{sec:cumulants-long-diagonal}

Recall that we denote by $\kumuDisjointRow$ the cumulants which correspond to the long diagonal arrow
in \eqref{eq:commutative-diagram-B2} between $\Poly_\disjoint$ and $\functions_\row$.

\subsection{Vanishing on small Young diagrams}

\begin{lemma}
\label{property:cool-vanishing}
Assume that $a,b\geq 0$ are integers and 
$F,G\in\functions$ are row functions such that 
\begin{align*} 
F(\lambda)&=0 \qquad \text{holds for each $\lambda\in\AllYoung$ such that } |\lambda|<a, \\
\nonumber
 G(\lambda)&=0 \qquad \text{holds for each $\lambda\in\AllYoung$ such that } |\lambda|<b.
\intertext{\indent Then}
\nonumber
 (F\row G)(\lambda)&=0 \qquad \text{holds for each $\lambda\in\AllYoung$ such that } |\lambda|<a+b. 
\end{align*}
\end{lemma}
\begin{proof}
Let $(f_r)$ be the kernel of $F$, see \eqref{eq:row-functions}.
The right-hand side of \eqref{eq:row-functions} 
involves only the values of $f_{r}$ over $r\leq \ell(\lambda)$
thus the collection of equalities \eqref{eq:row-functions} 
can be viewed as an upper-triangular system of linear equations. 
It follows immediately that 
\[ f_r(x_1,\dots,x_r) =0 \]
holds true for all $r\geq 0$ and all non-negative integers $x_1,\dots,x_r$
such that
\[ x_1+\cdots+x_r<a.\]

An analogous property is fulfilled by the kernel of the row function
$G$ (with the variable $a$ replaced by $b$).

From the very definition of the disjoint product it follows that also the kernel
of $F\otimes G$ also fulfills this property (with the variable $a$ replaced by $a+b$).
\end{proof}

\subsection{M\"obius invertion}

It is easy to show from the very definition \eqref{eq:what-is-cumulant} that
a cumulant 
\[ \kappa(X_1,\dots,X_n) \]
is a linear combination (with rational coefficients)
of the expressions of the form
\begin{equation}
\label{eq:sample-product}
 \prod_{b\in \nu}  \EE\left(  \prod_{i\in b} X_i \right)    
\end{equation}
over set-partitions of the set $[n]$.

We will show now that if $n\geq 2$ then the sum of these coefficients is equal to zero.
Indeed, if we set $X_1=\cdots=X_n=1$ to be the unit of the algebra
then from the very definition \eqref{eq:what-is-cumulant} 
of the cumulants it follows that the corresponding cumulant
$\kappa(1,\dots,1)=0$ vanishes while each product \eqref{eq:sample-product} is equal to $1$.

By specifying the conditional expectation $\EE=\id\colon\Poly_\disjoint\to\functions$ 
to be the long diagonal arrow in \eqref{eq:commutative-diagram-B2},
we have proved the following result.

\begin{lemma}[M\"obius invertion]
\label{lem:cumulants-expression}
For any partitions $\pi_1,\dots,\pi_l$ the function 
\begin{equation}
\label{eq:cond-kumu}
\kumuDisjointRow(\Ch_{\pi_1},\dots,\Ch_{\pi_l})   
\end{equation}
is a linear combination (with rational coefficients) of expressions of the form
\begin{equation*}
\bigotimes_{b\in \nu} \Ch_{\prod_{i\in b} \pi_i}   
\end{equation*}
over set-partitions $\nu$ of the set $[l]$. For example, in the case $l=3$
the function \eqref{eq:cond-kumu} is a linear combination of the following
five expressions:
\begin{multline*}
\Ch_{\pi_1}\otimes \Ch_{\pi_2} \otimes \Ch_{\pi_3},  \quad 
\Ch_{\pi_1 \pi_2} \otimes \Ch_{\pi_3},  \quad 
\Ch_{\pi_1 \pi_3} \otimes \Ch_{\pi_2},  \\
\Ch_{\pi_2 \pi_3} \otimes \Ch_{\pi_1},  \quad 
\Ch_{\pi_1 \pi_2 \pi_3}.    
\end{multline*}

Furthermore, if $l\geq 2$ then the sum of the coefficients in this linear combination is equal to $0$.   

\medskip

The above results hold true also for the cumulants $\kumuDisjointPoint$; in the latter case
the product $\otimes$ should be replaced by the usual pointwise multiplication of functions
on the set $\AllYoung$ of Young diagrams.
\end{lemma}

\subsection{Conditional cumulants for the diagonal arrow}

\begin{proposition}
\label{lem:disjoint-row-vanish-nice}
For any partitions $\pi_1,\dots,\pi_n$
\[  \kumuDisjointRow(\Ch_{\pi_1},\dots,\Ch_{\pi_n})(\lambda)=0 \]
holds true for any Young diagram $\lambda$ such that
$|\lambda|< |\pi_1|+\dots+|\pi_n|$.
\end{proposition}
\begin{proof}
This result is a straightforward consequence of 
\cref{lem:cumulants-expression} and \cref{property:cool-vanishing}.
\end{proof}

\section{Proof of the second main result: approximate factorization property}
\label{sec:key-tool-proof}

\subsection{Conditional cumulants for a commutative diagram}

\begin{lemma}[\cite{Brillinger}] 
\label{lem:iterated-cumulants}
Assume that $\Alg$, $\AlgB$ and $\AlgC$ are commutative unital algebras 
and let $\condExp{\Alg}{\AlgB}$, $\condExp{\AlgB}{\AlgC}$ and $\condExp{\Alg}{\AlgC}$
be unital maps between them such that the following diagram commutes:
\[ \begin{tikzpicture}[node distance=2cm, auto, baseline=-5ex]
  \node (A) {$\Alg$};
  \node (B) [right of=A] {$\AlgB$};
  \node (C) [below of=B]{$\AlgC$};

  \draw[->] (A) to node {$\condExp{\Alg}{\AlgB}$} (B);
  \draw[->] (B) to node {$\condExp{\AlgB}{\AlgC}$} (C);
  \draw[->] (A) to node [swap]{$\condExp{\Alg}{\AlgC}$} (C);
\end{tikzpicture}
\]

Then
\[  \condKumu{\Alg}{\AlgC}(x_1,\dots,x_n) = \sum_{\nu\in\partitions(n)} 
\condKumu{\AlgB}{\AlgC} \Big( \condKumu{\Alg}{\AlgB}(x_i : i\in b)      : b \in \nu \Big).
\]  
\end{lemma}
\begin{example}
\begin{align*}
    \condKumu{\Alg}{\AlgC}(x_1)         &= \condKumu{\AlgB}{\AlgC} \Big( \condKumu{\Alg}{\AlgB}(x_1) \Big), \\ 
    \condKumu{\Alg}{\AlgC}(x_1,x_2)     &= \condKumu{\AlgB}{\AlgC} \Big( \condKumu{\Alg}{\AlgB}(x_1,x_2) \Big)+
\condKumu{\AlgB}{\AlgC} \Big( \condKumu{\Alg}{\AlgB}(x_1), \condKumu{\Alg}{\AlgB}(x_2) \Big).
\end{align*}

\end{example}

\subsection{Proof of the second main result}
We are now ready to show the proof of \cref{theo:factorization-of-characters}.
For Reader's convenience we will restate this theorem in the following form.

\begin{theorem}[Reformulation of \cref{theo:factorization-of-characters}]
For any partitions $\pi_1,\dots,\pi_l$ the conditional cumulant
\[ F:=\kumuDisjointPoint(\Ch_{\pi_1},\ldots,\Ch_{\pi_l})\in\Poly \]
is of degree at most 
\[  |\pi_1|+\cdots+|\pi_l|+\ell(\pi_1)+\cdots+\ell(\pi_l)-2(l-1).\]
\end{theorem}
\begin{proof}
We use induction over $l$. For the induction base $l=1$
\[ F=\kumuDisjointPoint(\Ch_{\pi_1})=\Ch_{\pi_1} \]
and there is nothing to prove.
In the following we shall
consider the case $l\geq 2$; we assume that the statement of the theorem holds true for all $l'<l$.

\bigskip

\newcommand{\variabledelta}{j}
\newcommand{\Bigvariabledelta}{J}

We start with an observation that if for some value of the index $i$ we have
$\pi_i=\emptyset$ then $\Ch_{\pi_i}=1$ is the unit in $\Poly$ thus
(for $l\geq 2$) the corresponding cumulant vanishes
by the very definition \eqref{eq:what-is-cumulant}:
\[ F=\kumuDisjointPoint(\Ch_{\pi_1},\dots,1,\dots,\Ch_{\pi_l})=0 \]
and the claim holds true trivially. From the following on we shall assume that
$\pi_1,\dots,\pi_l\neq \emptyset$ are all non-empty.

\medskip

We denote
\[ d:= |\pi_1|+\cdots+|\pi_l|+\ell(\pi_1)+\cdots+\ell(\pi_l).\]
We will use a nested induction over $\Bigvariabledelta\in\{0,\dots,2l-2\}$ and show that
$F$ is of degree at most $d-\Bigvariabledelta$.
This result (for the special choice $\Bigvariabledelta=2l-2$) 
would finish the proof of the inductive step with respect 
to the variable $l$ and thus would conclude the proof.

\medskip

We start by noticing that
M\"obius invertion (\cref{lem:cumulants-expression} in the alternative formulation,
for the cumulants $\kumuDisjointPoint$)
implies that $F\in\Poly$ is of degree at most
$d$ and thus the induction base $\Bigvariabledelta=0$ holds trivially true.

\bigskip
\emph{The inductive step over $\Bigvariabledelta$.}
The inductive hypothesis with respect to the variable $\Bigvariabledelta$
states that $F$ is of degree (at most) $d-\Bigvariabledelta$
for some choice of $\Bigvariabledelta\in\{0,\dots,2l-3\}$.
We define $j\in\{0,\dots,l-2\}$ by setting
\begin{align*}
   J &= \begin{cases} 
              2j  &\text{if $J$ is even},\\
              2j+1  &\text{if $J$ is odd}
         \end{cases}
\intertext{and set}
n & := |\pi_1|+\cdots+|\pi_l|-\variabledelta \geq 2,\\
\rrr &:= \ell(\pi_1)+\cdots+\ell(\pi_l)+\variabledelta-\Bigvariabledelta \geq 1.
\intertext{In this way}
n+\rrr &= d- J, \\
n-\rrr &\geq |\pi_1|+\cdots+|\pi_l| - \ell(\pi_1)- \cdots- \ell(\pi_l).
\end{align*}

Our strategy is to apply \cref{lem:keylemma} 
either: 
\begin{itemize}
   \item in the original formulation (in the case $\variabledelta=0$), or,
   \item in the alternative formulation (in the case $\variabledelta\geq 1$)
\end{itemize}
for the above choice of $n$ and $\rrr$.
We first check that its assumptions are fulfilled.

\medskip

\emph{Assumption \ref{zero:initial-bound}.} 
This assumption is just the inductive hypothesis.

\medskip

\emph{Assumption \ref{zero:topdegree}.}
We have to verify this assumption only in the case $\variabledelta=0$.
By M\"obius invertion (\cref{lem:cumulants-expression}) and 
\cref{prop:vershik-kerov-jack-character}
it follows that
\begin{equation}
\label{eq:VK-applied}
\AllYoung \ni(\lambda_1,\dots,\lambda_m) \mapsto F(\lambda_1,\dots,\lambda_m)\in\Laurent      
\end{equation}
is a priori a polynomial of degree $|\pi_1|+\cdots+|\pi_l|$ and its homogeneous top-degree part is equal to
some multiple of
\[ A^{|\pi_1|+\cdots+|\pi_l|-\ell(\pi_1)-\cdots-\ell(\pi_l)}\ 
p_{\pi_1\cdots\pi_l}(\lambda_1,\dots,\lambda_m).  \]
However, since $l\geq 2$, the second part of \cref{lem:cumulants-expression}
implies that this multiple is actually equal to zero. 
In other words, \eqref{eq:VK-applied} is a polynomial of degree at most 
$|\pi_1|+\cdots+|\pi_l|-1=n-1$, as required.

\medskip

\emph{Assumption \ref{zero:laurent}.} 
M\"obius invertion (\cref{lem:cumulants-expression}) and 
\cref{prop:degree-laurent} 
imply that for any Young diagram $\lambda$
the evaluation $F(\lambda)$ is a Laurent polynomial of degree at most 
\[|\pi_1|+\cdots+|\pi_l|-\ell(\pi_1)-\cdots-\ell(\pi_l) < n-r+1\]
as required.

\medskip

\emph{Assumptions \ref{zero:vanishing} and \ref{zero:vanishing-B}.}
Our strategy is to consider the following simplified version of the
commutative diagram \eqref{eq:commutative-diagram-B2}.
\begin{equation}
\label{eq:commutative-diagram-B2-simple}
\begin{tikzpicture}[node distance=2cm, auto, baseline=-5ex]
  \node (A) {$\Poly_\disjoint$};
  \node (B) [right of=A] {$\Poly$};
  \node (C) [below of=B]{$\functions_\row$};

  \draw[->] (A) to node {$\id$} (B);
  \draw[->] (B) to node {$\id$} (C);
  \draw[->] (A) to node [swap]{$\id$} (C);
\end{tikzpicture}
\end{equation}
Recall that the conditional cumulants which correspond to 
the horizontal arrow are denoted by $\kumuDisjointPoint$;
the ones which correspond to the vertical arrow are denoted by $\kumuPointRow$;
and the ones which correspond to the diagonal arrow are denoted by $\kumuDisjointRow$.

Since $\kumuPointRow(x)=x$ it follows that
\begin{multline}
\label{eq:diagonal-vertical-horizontal}
F=\kumuDisjointPoint(\Ch_{\pi_1},\dots,\Ch_{\pi_l})=
\kumuPointRow \big( \kumuDisjointPoint(\Ch_{\pi_1},\dots,\Ch_{\pi_l})  \big) = \\ 
\kumuDisjointRow(\Ch_{\pi_1},\dots,\Ch_{\pi_l}) - 
\sum_{
\nu\neq \mathbf{1}}
\kumuPointRow \Big( \kumuDisjointPoint(\Ch_{\pi_i} : i\in b)   : b \in \nu \Big),
\end{multline}
where the last equality follows from Brillinger's formula (\cref{lem:iterated-cumulants});
the sum on the right-hand side runs over set-partitions of $[l]$
which are different from the maximal partition $\mathbf{1}=\big\{ \{1,\dots,l\} \big\}$.
We will substitute the summands which contribute to the right-hand side into \eqref{eq:top-key-equation} 
and we will investigate the resulting expressions.

\medskip

Firstly, \cref{lem:disjoint-row-vanish-nice} shows that 
$\big( \kumuDisjointRow(\Ch_{\pi_1},\dots,\Ch_{\pi_l})\big)(\lambda)=0$ 
for all $\lambda\in\AllYoung$ such that $|\lambda|\leq  |\pi_1|+\dots+|\pi_l|-1$ thus
for any integer $k\geq r$
\begin{equation}
\label{eq:zerozero}
 \Delta_{\lambda_1} \cdots \Delta_{\lambda_k} 
\big( \kumuDisjointRow(\Ch_{\pi_1},\dots,\Ch_{\pi_l})\big)^{\sym} (\lambda) =0
\end{equation}
for all $\lambda\in\AllYoung$ such that 
\[|\lambda| \leq |\pi_1|+\dots+|\pi_l|-1-k = (n+\rrr-2k) + (\variabledelta-1)+\underbrace{( k-\rrr)}_{\geq 0}.\]

\medskip

Secondly, let us fix the value of the set-partition $\nu\neq \mathbf{1}$ and 
let us investigate the corresponding summand on the right-hand side of \eqref{eq:diagonal-vertical-horizontal}.
From the inductive hypothesis over the variable $l$ it follows for each block $b\in\nu$ that
the cumulant $\kumuDisjointPoint(\Ch_{\pi_i} : i\in b)\in\Poly$ 
is of degree at most 
\[ \left(\sum_{i\in b} |\pi_i|+\ell(\pi_i) \right)-2( |b|-1).\]
Thus the approximate factorization property for the vertical arrow
(\cref{lem:vertical-arrow-R}) implies that
\begin{equation*}
G:= \kumuPointRow \Big( \kumuDisjointPoint(\Ch_{m_i} : i\in b) : b \in \nu \Big) 
\in\functions   
\end{equation*}
is a row function of degree (at most)
\begin{multline*}
 \sum_{b\in\nu } \left[ 
\left(\sum_{i\in b} |\pi_i|+\ell(\pi_i) \right)-2( |b|-1)
 \right] +2 -2 |\nu|  =\\ 
d-2(l-1) \leq (d-J)-1 =(n+r)-1.
\end{multline*}
The latter bound on the degree of $G$ and \cref{coro:small-degree-killed} imply that
\begin{equation}
\label{eq:zerozero2}
 [A^{n+\rrr-2k}] \Delta_{\lambda_1} \dots \Delta_{\lambda_k}  
 G(\lambda_1,\dots,\lambda_k)=0
\end{equation}
for an arbitrary choice of the integers $\lambda_1,\dots,\lambda_k\geq 0$.

\medskip

From \eqref{eq:zerozero} and \eqref{eq:zerozero2} it follows that:
\begin{itemize}
   \item if $\variabledelta=0$     then condition \ref{zero:vanishing} holds true;
   \item if $\variabledelta\geq 1$ then stronger condition \ref{zero:vanishing-B} holds true.
\end{itemize}

\smallskip

\emph{Conclusion.}
In this way we verified that the assumptions of \cref{lem:keylemma} are
indeed fulfilled. It follows therefore that
$F$ is of degree at most $d-\Bigvariabledelta-1$.
This concludes the proof of the induction step over the variable $\Bigvariabledelta$.
\end{proof}

\section{$\Chtt_n\in\Poly$ is of degree $n+1$}
\label{sec:degree-n+1}

The current section is devoted to the proof of the following
result which will be essential for the proof of \cref{theo:second-main-bis}.
\begin{proposition}
\label{prop:candidate-is-polynomial}
For each integer $n\geq 1$ the function 
$\Chtt_n\in\Poly$ (cf.~\eqref{eq:top-top-top})  
is an $\alpha$-polynomial function of degree at most $n+1$.
\end{proposition}

The proof is split into two parts:
first in \cref{prop:candidate-is-polynomial} we will show that $\Chtt_n\in\Poly$
and then in \cref{sec:degree-bound-chttn}  we will show the degree bound.

\subsection[]{The first part of proof of \cref{prop:candidate-is-polynomial}: $\Chtt_n\in\Poly$}
\label{sec:is-a-polynomial}

\newcommand{\cumulantB}[2]{\mathcal{K}_{#1,#2}}
\newcommand{\cumulant}[1]{\mathcal{K}_{#1}}
\newcommand{\moment}[1]{\mathcal{M}_{#1}}

Our proof of the claim that $\Chtt_n\in\Poly$ 
will be based on the following result
(the proof of which is postponed to \cref{sec:missing-proof-of-polynomial}).
\begin{proposition}
\label{prop:something-simpler-is-polynomial}
For any integer $n\geq 1$ and permutation $\pi\in\Sym{n}$ 
the function
on $\AllYoung$ given by
\begin{equation}
\label{eq:cumulant-definition}
 \cumulant{\pi} := 
(-1)^{|C(\pi)|}
\sum_{\substack{
\sigma_1,\sigma_2\in \Sym{n}, \\ \sigma_1 \sigma_2= \pi, \\ 
\langle \sigma_1,\sigma_2 \rangle \text{ is transitive}
}}  
 \Embed_{\sigma_1,\sigma_2}
\end{equation}
is an $\alpha$-polynomial function.
\end{proposition}

For an integer $l$ we consider
the homogeneous part of degree $l$ of the
anisotropic Stanley polynomial for $\cumulant{\pi}$.
This new anisotropic Stanley polynomial defines a function on $\AllYoung$
which is explicitly given by
\begin{equation}
\label{eq:polypolypoly}
 \cumulant{\pi}^l :=
 (-1)^{|C(\pi)|}
\sum_{\substack{
\sigma_1,\sigma_2\in \Sym{n}, \\ \sigma_1 \sigma_2= \pi, \\ 
|C(\sigma_1)|+|C(\sigma_2)|=l,\\
\langle \sigma_1,\sigma_2 \rangle \text{ is transitive}
}}  
 \Embed_{\sigma_1,\sigma_2}.
\end{equation}

On the other hand, by \cref{prop:generate-the-same} the function $\cumulant{\pi}\in\Poly$ can be expressed as a
polynomial in the indeterminates $\gamma,\Sfunct_2,\Sfunct_3,\dots$.
To these indeterminates we associate the degrees as in \eqref{eq:filtration2};
the corresponding 
homogeneous part of degree $l$ of this polynomial 
is clearly an element of $\Poly$.
By \cref{eq:S-is-for-Stanley} this polynomial is equal to $\cumulant{\pi}^l$.
In this way we proved that $\cumulant{\pi}^l\in\Poly$.

Since $\Chtt_n$ is a linear combination of such functions,
this completes the proof of the claim that $\Chtt_n\in\Poly$.

\subsection{Proof of \cref{prop:something-simpler-is-polynomial}}
\label{sec:missing-proof-of-polynomial}
\begin{proof}[Proof of \cref{prop:something-simpler-is-polynomial}]
If $X$ is an arbitrary set, we denote by $\partitions(X)$ the set of all set partitions of $X$.
Let $\pi\in\Sym{n}$ be a fixed permutation.
We denote
\[ \partitions_\pi:=\{ P \in \partitions([n]) : P \geq C(\pi)\}\]
which is the set of the partitions $P$ of the underlying set $[n]$ which have the property 
that each cycle $c\in C(\pi)$ 
is contained in one of the blocks of the partition $P$.
We define
\begin{equation}
\label{eq:moment-cumulant}
 \moment{\pi}(\lambda) := \sum_{P\in\partitions_\pi}  \prod_{B\in P} \cumulant{\pi|_{B}}(\lambda),     
\end{equation}
where 
the product runs over the blocks of the partition $P$ and
$\pi|_{B}: B \to B$ denotes the restriction of the permutation $\pi$ to the set $B\subseteq [n]$.

It is not hard to see that
\begin{equation}
\label{eq:moment-cumulant-nice}
 \moment{\pi}(\lambda) =
(-1)^{|C(\pi)|}
\sum_{\substack{\sigma_1,\sigma_2\in \Sym{n}, \\ \sigma_1 \sigma_2= \pi}}  
 \Embed_{\sigma_1,\sigma_2}(\lambda)
\end{equation}
(the difference between the right-hand side of \eqref{eq:moment-cumulant-nice} and \eqref{eq:cumulant-definition}
lies in the requirement on transitivity);
indeed, the summands on the right-hand side of \eqref{eq:moment-cumulant-nice} can be pigeonholed according 
to the set of orbits of the group $\langle \sigma_1,\sigma_2\rangle$ and each such a class of summands corresponds to
an appropriate summand on the right-hand side of \eqref{eq:moment-cumulant}.

\smallskip

The function $\cumulant{\pi}$ (respectively, the function $\moment{\pi}$)
depends only on the conjugacy class of the permutation $\pi$.
Since such conjugacy classes are in a bijective correspondence with the integer partitions,
we may index the family $(\cumulant{\pi})$ (respectively, the family $(\moment{\pi})$)
by $\pi$ being an integer partition.
With this perspective, \eqref{eq:moment-cumulant} can be viewed as the following system of 
equalities:
\begin{equation}
\label{eq:moment-cumulant-2}
\left\{ 
\begin{aligned}
  \moment{i} &=  \cumulant{i} & \text{for all $i\geq 1$}, \\
  \moment{i,j} &=  \cumulant{i,j}+ \cumulant{i} \cumulant{j}
& \text{for all $i\geq j\geq 1$}, \\ 
  \moment{i,j,k} &=  \cumulant{i,j,k}+ \cumulant{i} \cumulant{j,k}
+ \cumulant{j} \cumulant{i,k}
+ \cumulant{j} \cumulant{i,k} + \hspace{-7ex} 
\\ &
+ \cumulant{i} \cumulant{j} \cumulant{k}
& \text{for all $i\geq j\geq k \geq 1$},\\
& \vdots 
\end{aligned}
\right.
\end{equation}
If we view $(\cumulant{\pi})$ as variables, \eqref{eq:moment-cumulant-2} 
becomes an upper-triangular system of algebraic equations which can be solved.
This shows that each $\cumulant{\sigma}$ can be expressed as a polynomial in the variables
$(\moment{\pi})$. 
Thus for our purposes it is enough to show that
$\moment{\pi}$ is an $\alpha$-polynomial function.
This is exactly the content of \cref{lem:non-transitive-polynomial} which we prove below.
\end{proof}

\begin{lemma}
\label{lem:non-transitive-polynomial}
For each integer $n\geq 1$ and permutation $\pi\in\Sym{n}$
the function  
\[
\lambda\mapsto  \sum_{\substack{
\sigma_1,\sigma_2\in \Sym{n} \\ 
\sigma_1 \sigma_2=\pi \\
}}  
\ \Embed_{\sigma_1,\sigma_2}(\lambda).
\]
is an $\alpha$-polynomial function.
\end{lemma}
\begin{proof}
The specialization $A=1$ of the Jack characters coincides with the usual characters
of the symmetric groups 
$\lambda\mapsto \Ch^{\alpha=1}_\pi(\lambda)$
for which several convenient results are known. 

Firstly, the following closed formula for the characters of the symmetric groups is known,
cf.~\cite[Theorem 2]{FeraySniady2011a}:
\begin{multline*}
\Ch^{\alpha=1}_\pi  =
\sum_{\substack{\sigma_1,\sigma_2\in \Sym{n}, \\ \sigma_1 \sigma_2= \pi}}  
(-1)^{\sigma_1} N_{\sigma_1,\sigma_2} = \\
(-1)^{|C(\pi)|} \sum_{\substack{\sigma_1,\sigma_2\in \Sym{n}, \\ \sigma_1 \sigma_2= \pi}}  
(-1)^{|C(\sigma_2)|} N_{\sigma_1,\sigma_2}
\end{multline*} 
where both sides of the above equality should be viewed as functions on $\AllYoung$.

Secondly, there exists a multivariate polynomial $H\in\Q[s_2,\dots,s_{2n}]$ with the property that
\[ \Ch^{\alpha=1}_\pi = H(S_2,S_3,\dots,S_{2n}), \]
where $S_2,\dots,S_{2n}$ are the isotropic functionals of shape \eqref{eq:what-is-isotropic-s};
the existence and the explicit form of this polynomial $H$ were discussed in
\cite{DolegaFeraySniady2008}.

By combining the above two results it follows that
\begin{align}
\label{eq:alpha-1-magic-sum}
 \sum_{\substack{\sigma_1,\sigma_2\in \Sym{n}, \\ \sigma_1 \sigma_2= \pi}}  
(-1)^{|C(\sigma_2)|} N_{\sigma_1,\sigma_2} 
&=  (-1)^{|C(\pi)|}   H(S_2,S_3,\dots,S_{2n}),    
\intertext{where both sides should be viewed as functions on $\AllYoung$.
We claim that}
\label{eq:flaming-lips}
 \sum_{\substack{\sigma_1,\sigma_2\in \Sym{n}, \\ \sigma_1 \sigma_2= \pi 
} } 
\Embed_{\sigma_1,\sigma_2} 
&=  (-1)^{|C(\pi)|}   H(\Sfunct_2,\Sfunct_3,\dots,\Sfunct_{2n});     
\end{align}
indeed \cref{lem:stanley-polunomial-isotropic-and-anisotropic-the-same} and \cref{eq:S-is-for-Stanley}
imply that isotropic Stanley polynomials for both sides of 
\eqref{eq:alpha-1-magic-sum} are equal to the corresponding 
anisotropic Stanley polynomials of both sides of \eqref{eq:flaming-lips}.

An application of \cref{prop:generate-the-same} implies that \eqref{eq:flaming-lips}
is an $\alpha$-polynomial function, as required.
\end{proof}

\subsection{The second part of proof of \cref{prop:candidate-is-polynomial}: degree bound on $\Chtt_n\in\Poly$}
\label{sec:degree-bound-chttn}

We are ready to prove the remaining part of \cref{prop:candidate-is-polynomial}, namely the degree bound.
We already proved in \cref{sec:is-a-polynomial} that $\Chtt_n\in\Poly$;
it follows therefore by \cref{prop:generate-the-same} that there exists a 
multivariate polynomial $H(\gamma,s_2,s_3,\dots)$ with the property that
\begin{equation}
\label{eq:ch-in-s}
\Chtt_n = H(\gamma,\Sfunct_2,\Sfunct_3,\dots),   
\end{equation}
where we use the usual substitution \eqref{eq:gamma} for the variable $\gamma$.
We associate to the indeterminates $\gamma,s_2,s_3,\dots$ the degrees by setting
\begin{equation}
\label{eq:gradacja-ZZtop}
  \left\{ \begin{aligned} \degg \gamma &= 1, \\ 
             \degg s_n &= n, \qquad \text{for }n\geq 2.
          \end{aligned} \right. 
\end{equation}
\emph{We will prove below that with respect to this choice of degrees, $H$
is a homogeneous polynomial of degree $n+1$.} 

\medskip

Indeed; let $H_d(\gamma,s_2,s_3,\dots)$ denote the homogeneous part of
the polynomial $H$ of degree $d$. 
It follows by \cref{eq:S-is-for-Stanley} that the anisotropic Stanley polynomial
which corresponds to the specialization
\begin{equation}
\label{eq:yes-you-are-special}
H_d(\gamma,\Sfunct_2,\Sfunct_3,\dots)   
\end{equation}
is homogeneous of degree $d$;
thus the anisotropic Stanley polynomial for \eqref{eq:yes-you-are-special} 
is the homogeneous part of degree $d$
of the anisotropic Stanley polynomial for \eqref{eq:ch-in-s}.

On the other hand, 
from the way $\Chtt_n$ was defined in \eqref{eq:top-top-top} 
and by \cref{lem:stanley-polunomial-isotropic-and-anisotropic-the-same}
it follows that the anisotropic Stanley polynomial for $\Chtt_n$ is homogeneous
of degree $n+1$.

By combining the above two observations it follows that if $d\neq n+1$ 
then Stanley polynomial for \eqref{eq:yes-you-are-special} must be the zero polynomial.
Finally, it follows that $H_d=0$ for $d\neq n+1$. 
This completes the proof that $H$ is homogeneous of degree $n+1$.

\medskip

The degree bound on the polynomial $H$ implies, by \cref{prop:generate-the-same}, the desired
bound on the degree of $\Chtt_n$, regarded as an element of the filtered algebra $\Poly$.
This concludes the proof of \cref{prop:candidate-is-polynomial}.

\section{Proof of the first main result: top-degree of Jack characters}
\label{sec:proof}

\begin{proof}[Proof of  \cref{theo:second-main-bis}]
We define functions $F$, $F_1$, and $F_2$ by
\[ F:= \underbrace{\Ch_n}_{F_1:=} - \underbrace{\Chtt_n}_{F_2:=}; \]
we will show that $F$ fulfills the assumptions of \cref{lem:keylemma} for $r=1$.

\bigskip

\emph{Condition \ref{zero:initial-bound}.}
An analogue of condition \ref{zero:initial-bound} is fulfilled both for $F_1$
(cf.~\cref{thm:degree-of-Jack-character})
as well as for $F_2$
(cf.~\cref{prop:candidate-is-polynomial}).
It follows that \ref{zero:initial-bound} is satisfied for $F=F_1-F_2$, as required.

\bigskip

\emph{Condition \ref{zero:topdegree}.}
The polynomial
\begin{equation}
\label{eq:VK-chtt}
 \AllYoung\ni(\lambda_1,\dots,\lambda_m) \mapsto \Chtt_n (\lambda_1,\dots,\lambda_m)\in\Laurent     
\end{equation}
is equal to the anisotropic Stanley polynomial for $\Chtt_n$ evaluated for the anisotropic coordinates
\[ P=\left( A^{-1},  \dots,A^{-1} \right), \qquad 
   Q=\left( A \lambda_1,\dots, A\lambda_m\right).
\]
Thus, by \cref{lem:stanley-polunomial-isotropic-and-anisotropic-the-same},
the top-degree part of the polynomial \eqref{eq:VK-chtt} corresponds to the summands
$(\sigma_1,\sigma_2)$ from the definition \eqref{eq:top-top-top} of $\Chtt_n$
for which the number of cycles of $\sigma_2$ takes the maximal possible value;
in other words $\sigma_1=\id$ must be the identity permutation.
The transitivity requirement implies that in this case $\sigma_2$
must have exactly one cycle; there are $(n-1)!$ such permutations.
It follows that the polynomial \eqref{eq:VK-chtt} is of degree $n$ and its homogeneous
top-degree part is equal to 
\[A^{n-1}\ p_n(\lambda_1,\dots,\lambda_m).\]

On the other hand, \cref{prop:vershik-kerov-jack-character} gives the same polynomial
as the leading term for the map
\[ \AllYoung\ni(\lambda_1,\dots,\lambda_m) \mapsto \Ch_n (\lambda_1,\dots,\lambda_m)\in\Laurent.\]  

\smallskip

Due to the cancellation, it follows that
\[ \AllYoung\ni(\lambda_1,\dots,\lambda_m) \mapsto F(\lambda_1,\dots,\lambda_m) \] 
is a polynomial of degree (at most) $n-1$.
This completes the proof of condition \ref{zero:topdegree}.

\bigskip

\newcommand{\newell}{k}
\newcommand{\FunctionG}{H}

\emph{Condition \ref{zero:vanishing}.}
Since 
\begin{equation} 
\label{eq:small-boxes-vanishes}
F_1(\lambda)=\Ch_n(\lambda)=0 \qquad \text{if $|\lambda|<n$},   
\end{equation}
it follows that
\begin{equation} 
\label{eq:small-boxes-vanishes-B}
 \Delta_{\lambda_1} \cdots \Delta_{\lambda_k} F_1^{\sym}(\lambda_1,\lambda_2,\dots) = 0 \qquad 
\text{if $|\lambda|+k< n$};
\end{equation}
it follows that an analogue of condition \ref{zero:vanishing} is indeed fulfilled for $F_1$.

\bigskip

In the remaining part of the proof we concentrate on $F_2$.
The definition of an embedding, as well as the definitions of $N_{G}(\lambda)$ and $\Embed_{G}(\lambda)$ 
from \cref{sec:number-of-embeddings} 
can be naturally extended to 
an arbitrary tuple $\lambda=(\lambda_1,\dots,\lambda_\newell)$ of non-negative integers
(which does not necessarily forms a Young diagram);
condition \eqref{embedding:young} should be simply replaced by
\[
 1\leq f_1(w) \leq \lambda_{f_2(b)}
\]
for each pair of vertices $w\in V_{\circ}$, $b\in V_{\bullet}$ connected by an edge.
It is easy to check that so defined $N_G(\lambda_1,\dots,\lambda_\newell)$ is a symmetric function of its $\newell$
arguments.
Thus $\Chtt_n(\lambda_1,\dots,\lambda_\newell)$ given by \eqref{eq:top-top-top} is a symmetric
function of its $\newell$ arguments; in other words $F_2^{\sym}=F_2=\Chtt_n$
and no additional symmetrization is necessary.

For $\sigma_1,\sigma_2\in\Sym{n}$, any embedding $(f_1,f_2)$ of the bicolored graph $G_{\sigma_1,\sigma_2}$ 
into $\lambda=(\lambda_1,\dots,\lambda_\newell)$
can be alternatively viewed as a pair of functions
\[ f_1:[n]\rightarrow\N, \qquad f_2:[n]\rightarrow [\newell] \]
with the property that $f_s$ is constant on each cycle of $\sigma_s$ for $s\in\{1,2\}$ and such that
\begin{equation}
\label{eq:embedding-is-nice}
 1\leq f_1(m) \leq \lambda_{f_2(m)} \qquad \text{holds true for any $m\in[n]$}.   
\end{equation}
It follows that the sum on the right-hand-side of
\eqref{eq:top-top-top} can be alternatively written as
\begin{multline}
\label{eq:curly-bracket}
\Chtt_n(\lambda_1,\dots,\lambda_\newell)=\\
\ \hspace{-30ex}
{\sum_{\substack{f_1\colon[n]\rightarrow\N, \\ f_2\colon[n]\rightarrow[\newell], \\
\text{condition \eqref{eq:embedding-is-nice} holds true}}}
\sum_{\substack{ \sigma_2\in\Sym{n}, \\ \text{$f_2$ is constant on each cycle of $\sigma_2$}}} 
}
\\ 
\underbrace{ \Bigg\{ \sum_{\substack{ \sigma_1\in\Sym{n}, \\ \text{$f_1$ is constant on each cycle of $\sigma_1$,} \\ 
\text{$\langle \sigma_1,\sigma_2\rangle$ is transitive}}}\!\!\!\!\!\!\!\!\!\!
\gamma^{n+1-|C(\sigma_1)|-|C(\sigma_2)|}\;
A^{|C(\sigma_1)|} \left(\frac{-1}{A}\right)^{|C(\sigma_2)|}  \Bigg\}
}_{\FunctionG(\lambda_1,\dots,\lambda_\newell):=}.
\end{multline}

\medskip

Let us fix the values of $f_1$, $f_2$ and $\sigma_2$; we denote by $\FunctionG(\lambda_1,\dots,\lambda_\newell)$ 
the value of the curly bracket in the above expression \eqref{eq:curly-bracket}. 
We will investigate in the following 
the contribution of $\FunctionG$ to 
\begin{equation}
\label{eq:here-contribution}
 [A^{n+r-2k}] \Delta_{\lambda_1} \cdots \Delta_{\lambda_k} \Chtt_n(\lambda_1,\dots,\lambda_k),  
\end{equation}
cf.~the left-hand-side of \eqref{eq:top-key-equation}.

Firstly, notice that if $i\in[\newell]$ is such that $i\notin \Image f_2$ then 
$\FunctionG(\lambda_1,\dots,\lambda_\newell)$ does not depend on the variable 
$\lambda_i$ thus $\Delta_{\lambda_i} \FunctionG(\lambda_1,\dots,\lambda_\newell)=0$
and thus the contribution of $\FunctionG$ to \eqref{eq:here-contribution} vanishes. 
Thus it is enough to consider only surjective functions
$f_2:[n]\rightarrow[\newell]$.

Secondly, $\FunctionG(\lambda_1,\dots,\lambda_\newell)$ is a Laurent polynomial of degree at most
$n+1-2|C(\sigma_2)|$, thus in order for the coefficient of $A^{n+1-2\newell}$ to be non-zero, we must have
$|C(\sigma_2)|\leq\newell$.

The above two observations imply that in order to have a nontrivial contribution we must have $|C(\sigma_2)|=\newell$
and $f_2:C(\sigma_2)\rightarrow[\newell]$ must be a bijection; we will assume this in the following.

\medskip

Assume that $f:[n]\rightarrow \N^2$ given by $f(i)=\big( f_1(i), f_2(i) \big)$ is not injective.
It follows that there exist $i\neq j$ with $i,j\in[n]$ such that $f(i)= f(j)$.
For a given $\sigma_1\in\Sym{n}$ we denote $\sigma'_1:=(i,j)\; \sigma_1$, where $(i,j)\in\Sym{n}$ denotes the transposition
interchanging $i$ and $j$.

Note that $f=f\circ (i,j)$ thus
\begin{multline*}
 \big( \text{$f_1$ is constant on each cycle of $\sigma_1$}\big) \iff \\ 
f_1=f_1 \circ \sigma_1 \iff f_1=f_1 \circ (i,j)\; \sigma_1 \iff \\ 
 \big( \text{$f_1$ is constant on each cycle of $\sigma_1'$}\big).
\end{multline*}

\smallskip

We will show now that
\begin{equation} 
\label{eq:equivalence-transitive}
\text{$\langle \sigma_1,\sigma_2\rangle$ is transitive} \iff 
  \text{$\langle \sigma'_1,\sigma_2\rangle$ is transitive}.
\end{equation}
We will show only that the left-hand side implies the right-hand side; the opposite implication will follow
by interchanging the values of $\sigma_1$ and $\sigma_1'$.

Consider the case when $i$ and $j$ belong to different cycles of $\sigma_1$. 
Then
$C(\sigma_1')=C(\sigma_1) \vee \big\{ \{i,j\} \big\}$ is the set-partition obtained from the set-partition $C(\sigma_1)$
by merging the two blocks containing $i$ and $j$.
The left-hand side of \eqref{eq:equivalence-transitive} 
implies that $C(\sigma_1)\vee C(\sigma_2)=1_n$ is the maximal partition, thus
$C(\sigma_1')\vee C(\sigma_2)= C(\sigma_1)\vee C(\sigma_2) \vee \big\{ \{i,j\} \big\}=1_n$ as well.
This implies the right-hand side of \eqref{eq:equivalence-transitive}.

Consider the case when $i$ and $j$ belong to the same cycle of $\sigma_1$. Then
$C(\sigma_1)=C(\sigma'_1) \vee \big\{ \{i,j\} \big\}$.
Since $f_2:C(\sigma_2)\rightarrow[\newell]$ is a bijection, the equality $f_2(i)=f_2(j)$ implies that
$i$ and $j$ belong to the same cycle of $\sigma_2$. It follows that
\begin{multline*} 
1_n= C(\sigma_1) \vee C(\sigma_2) = \big( C(\sigma_1') \vee \{\{i,j\}\} \big) \vee C(\sigma_2) = \\
 C(\sigma_1') \vee \big( \{\{i,j\}\}  \vee C(\sigma_2) \big) = 
 C(\sigma_1') \vee C(\sigma_2). 
\end{multline*}
The latter is equivalent to the right-hand side of \eqref{eq:equivalence-transitive}.
 
\smallskip

The equivalence \eqref{eq:equivalence-transitive} implies that
$\sigma_1$ contributes to the sum within $\FunctionG$ in \eqref{eq:curly-bracket} if and only if 
$\sigma_1'$ contributes to this sum.
The map $\sigma_1\mapsto\sigma_1'$ is an involution without fixpoints.
It is easy to check that the contributions of $\sigma_1$ and $\sigma_1'$ to $[A^{n+1-2\newell}]\FunctionG$ cancel
each other.

In this way we proved that the contribution of a summand in \eqref{eq:curly-bracket}
to \eqref{eq:here-contribution} vanishes unless $f$ is injective.
In other words, if we introduce a new function $\widetilde{\Chtt_n}$
given by \eqref{eq:curly-bracket} in which we \emph{additionally restrict the summation}
only to functions $f$ which are injective, this would not change the value of
\eqref{eq:here-contribution} and  
\begin{multline}
\label{eq:almost-home}  
[A^{n+r-2k}] \Delta_{\lambda_1} \cdots \Delta_{\lambda_k} F_2^{\sym}(\lambda_1,\dots,\lambda_k)    =\\ 
[A^{n+r-2k}] \Delta_{\lambda_1} \cdots \Delta_{\lambda_k} \Chtt_n(\lambda_1,\dots,\lambda_k)    =\\
    [A^{n+r-2k}] \Delta_{\lambda_1} \cdots \Delta_{\lambda_k} \widetilde{\Chtt_n}(\lambda_1,\dots,\lambda_k).
\end{multline}

The injectivity assertion on $f$ implies that 
the function $\widetilde{\Chtt_n}$ fulfills an analogue of \eqref{eq:small-boxes-vanishes}
and henceforth an analogue of \eqref{eq:small-boxes-vanishes-B} as well.
It follows that the right-hand side of \eqref{eq:almost-home} vanishes if $|\lambda|+k<n$.

The latter observation together with \eqref{eq:small-boxes-vanishes-B} implies that
$F=F_1-F_2$ indeed fulfills condition  \ref{zero:vanishing}.

\bigskip

\emph{Condition \ref{zero:laurent}.}
\cref{prop:degree-laurent}
implies that $\Ch_n(\lambda)\in\Laurent$ is a Laurent polynomial of degree at most
$n-1$.

By \eqref{eq:normalized-embedding} it follows that the summand 
\[ \gamma^{n+1-|C(\sigma_1)|-|C(\sigma_2)|}
\ \Embed_{\sigma_1,\sigma_2} \in\Laurent\]
in \eqref{eq:top-top-top} is a Laurent polynomial of degree
\[ \big( n+1-|C(\sigma_1)|-|C(\sigma_2)| \big) + \big( |C(\sigma_1)| - |C(\sigma_2)| \big) \leq n-1 \]
and, henceforth, $\Chtt_n(\lambda)\in\Laurent$ is a Laurent polynomial of degree at most $n-1$.

By combining the above two observations it follows that $F$ indeed fulfills condition \ref{zero:laurent}.

\bigskip

\emph{Conclusion.}
We verified that $F$ fulfills the assumptions of \cref{lem:keylemma} for $r=1$;
it follows that $F= \Ch_n - \Chtt_n\in\Poly$ is of degree at most $n$.

\bigskip

\emph{The homogeneous part of degree $n$ is zero.}
We revisit \cref{sec:degree-bound-chttn};
there exist polynomials $H$ and $H'$ such that
\begin{align*}
\Chtt_n &= H(\gamma,\Sfunct_2,\Sfunct_3,\dots),  \\ 
\Ch_n   &= H'(\gamma,\Sfunct_2,\Sfunct_3,\dots).
\end{align*} 
The already proved bound on the degree of $F$ implies that --- with respect to the degrees given by \eqref{eq:gradacja-ZZtop} --- the difference
$H-H'$ is a polynomial of degree at most $n$.

The results from \cref{sec:degree-bound-chttn} imply that
--- with respect to the degrees given by \eqref{eq:gradacja-ZZtop} --- the homogeneous part of $H$ of degree $n$ is equal to zero.
The homogeneous part of $H'$ of degree $n$ is also
equal to zero by a parity argument (even vs.~odd) 
based on a result of Dołęga and F\'eray 
\cite[Proposition 3.7]{DoleegaFeray2014}.

In this way we proved that $H-H'$ is a polynomial of degree at most $n-1$ which completes the proof.
\end{proof}

\appendix

\section{Proof of \cref{coro:Kerov-Lassalle}}
\label{sec:proof-theo-kerov-lassalle}

\subsection{Expanders}
\label{sec:expander}

\begin{definition}
\label{def:weighted-expander}
We say that $(G,q)$ is an \emph{expander} if the following conditions are fulfilled:
\begin{enumerate}[label=(\alph*)]
\item $G$ is a bicolored graph with the set of black vertices $\V_\bullet$ and the set of white vertices
$\V_\circ$;

\item $q\colon \V_\bullet \to\{2,3,\dots\}$ is a function on the set of the black vertices;

\item $|\V_\circ|= \sum_{v \in \V_\bullet} \big( q(v)-1 \big)$,

 \item \label{enum:marriage} for every set $A\subset \V_\bullet$
such that $A\neq\emptyset$ and $A\neq \V_\bullet$ we require that 
\begin{multline*} \#\big\{ v\in \V_\circ: \text{$v$ is connected to at least one vertex in $A$}\big\} > \\
\sum_{i\in A} \big( q(i)-1 \big).   
\end{multline*}
\end{enumerate}
\end{definition}

\subsection{Kerov--Lassalle polynomials and expanders}

\newcommand{\G}{\mathcal{G}}

\begin{proposition}
\label{lem:extract-kerov}
Let $F\in\Poly$, let $\mathcal{G}$ be a finite collection of connected bicolored graphs and
let $\mathcal{G}\ni G\mapsto {m}_G\in \Q[\gamma]$ 
be a function on it. Assume that for each $\lambda\in\AllYoung$
$$ F(\lambda) = \sum_{G\in\mathcal{G}}  
 {m}_G\ \Embed_G(\lambda).
$$

Then the Kerov--Lassalle polynomial for $F$ is explicitly given by
\[ F  =  \sum_{G \in \G} \sum_{q} (-{m}_G) \prod_{v\in \V_\bullet(G)} \Rfunct_{q(v)} ,\]
where the sums run over $G$ and $q$ for which $(G,q)$ is an expander.
\end{proposition}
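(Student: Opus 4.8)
The plan is to reduce the statement to a single per‑graph identity and then read off the formula using the hypothesis $F\in\Poly$. First I would record that both sides of the asserted equality lie in $\Poly$: the left‑hand side by assumption, and the right‑hand side because it is a polynomial in the free cumulants $\Rfunct_2,\Rfunct_3,\dots$. Since $\Rfunct_2,\Rfunct_3,\dots$ are algebraically independent generators of $\Poly$ (this is the existence of the Kerov polynomial, which for $\alpha=1$ is Biane's theorem \cite{Biane2003} already invoked in the excerpt), the Kerov polynomial of $F$ is the \emph{unique} polynomial $P$ with $F=P(\Rfunct_2,\Rfunct_3,\dots)$; hence it suffices to prove the displayed equality as an identity in $\Poly$. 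Optionally, since the gradation on $\Stanley$ restricts to the one on $\Poly$ and, by \cref{lem:stanley-polunomial-isotropic-and-anisotropic-the-same}, passing to anisotropic Stanley polynomials sends $\Embed_G$ to the isotropic embedding function $N_G$ and $\Rfunct_k$ to $\lambda\mapsto R_k^\lambda$, one may equivalently work in the classical case $\alpha=1$, i.e.\ with ordinary polynomial functions on Young diagrams.

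Next I would put both sides on the same footing. Recall that the free cumulant of order $j$ is itself an embedding function, $\lambda\mapsto R_j^\lambda=\Embed_{\star_j}$, where $\star_j$ is the connected ``star'' consisting of one black vertex with $j-1$ white leaves, and that embedding functions are multiplicative under disjoint unions of bicolored graphs; thus $\prod_{v\in\V_\bullet(G)}\Rfunct_{q(v)}=\Embed_{G^{q}}$, where $G^{q}:=\bigsqcup_{v\in\V_\bullet(G)}\star_{q(v)}$. The bookkeeping point is that for an expander $(G,q)$ the graph $G^{q}$ has exactly $\sum_{v}\bigl(q(v)-1\bigr)=|\V_\circ(G)|$ white and exactly $|\V_\bullet(G)|$ black vertices, so $\deg\Embed_{G^{q}}=\sum_v q(v)=|\V_\circ(G)|+|\V_\bullet(G)|=\deg\Embed_G$; in particular the comparison can be made degree by degree, and the right‑hand side of the Proposition is precisely $-\sum_{G}m_G\sum_{q}\Embed_{G^{q}}$, the inner sum over weights $q$ making $(G,q)$ an expander. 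Since $\Poly$ is spanned by the functions $\Embed_{G'}$ with $G'$ a disjoint union of stars, it admits a graded complement $\mathcal{J}$ in $\Stanley$; let $\operatorname{pr}\colon\Stanley\to\Poly$ be the associated projection.

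The heart of the matter is then the following per‑graph identity, which I would prove on its own: for every connected bicolored graph $G$,
\[
\operatorname{pr}\!\bigl(\Embed_G\bigr)\;=\;-\!\!\!\sum_{\substack{q\ :\ (G,q)\ \text{is}\\ \text{an expander}}}\ \prod_{v\in\V_\bullet(G)}\Rfunct_{q(v)}.
\]
Granting it, the Proposition follows immediately: applying $\operatorname{pr}$ to $F=\sum_{G\in\mathcal{G}}m_G\Embed_G$ and using linearity yields $\operatorname{pr}(F)=\sum_G m_G\,\operatorname{pr}(\Embed_G)=\sum_G\sum_q(-m_G)\prod_{v}\Rfunct_{q(v)}$, while $\operatorname{pr}(F)=F$ because $F\in\Poly$; and since the right‑hand side is visibly a polynomial in the $\Rfunct_k$, it is the Kerov polynomial of $F$. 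To prove the per‑graph identity I would use the generating‑function description of $\Embed_G$: write $\Embed_G(\lambda)$ as an iterated residue built from the functional inverse of the Cauchy transform of the transition measure of $\lambda$, with one contour variable attached to each black vertex of $G$ and the incidences through the white vertices prescribing how these variables are coupled, and then apply multivariate Lagrange inversion. The surviving terms are indexed by ``marriages'' distributing the white vertices among the black vertices, and a direct analysis shows that such a marriage contributes a nonzero monomial $\prod_v\Rfunct_{q(v)}$ — with $q(v)-1$ equal to the number of white vertices it assigns to $v$ — exactly when the assignment saturates the total count while every \emph{proper} nonempty set of black vertices receives strictly fewer white vertices than the size of its white neighbourhood, which is precisely \cref{def:weighted-expander}; the transitivity (connectivity) hypothesis on $G$ is what forces all genuinely disconnected contributions to cancel and pins down the single overall sign.

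I expect the main obstacle to be exactly this last step — setting up the residue/Lagrange‑inversion formula for $\Embed_G$ and carrying out the combinatorial matching — in particular (i)~producing the \emph{strict} Hall inequality in \cref{def:weighted-expander} rather than the non‑strict one, (ii)~correctly isolating the two exceptional sets $A=\emptyset$ and $A=\V_\bullet(G)$, and (iii)~controlling the signs so that the leftover coefficient is exactly $-m_G$. The reductions of the first two paragraphs and the degree count are routine; all the content is concentrated in the per‑graph identity. An alternative to the generating‑function route is an induction on $\deg\Embed_G=|\V_\circ(G)|+|\V_\bullet(G)|$ via a contraction identity that rewrites $\Embed_G$ modulo embedding functions of strictly smaller degree and defers the free cumulants to the end, the expander condition being what remains once the contraction has been iterated to exhaustion; either way the same combinatorial core must be settled.
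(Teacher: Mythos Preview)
Your reduction via \cref{lem:stanley-polunomial-isotropic-and-anisotropic-the-same} to the isotropic $\alpha=1$ problem is exactly what the paper does: one passes from $F=\sum_G m_G\,\Embed_G$ to $\bar F=\sum_G(-1)^{|\V_\bullet(G)|}m_G\,N_G$ and asks for the expression of $\bar F$ in the ordinary free cumulants $R_2,R_3,\dots$. At that point, however, the paper does not rederive anything: it simply quotes the explicit Kerov--polynomial formula proved in \cite{DolegaF'eray'Sniady2008} for $\Ch_n^{\alpha=1}$ and observes, via \cite[Lemma 4.2]{FeraySniady2011}, that the same argument applies verbatim to any $\bar F$ of the form \eqref{eq:isotropic-Stanley-formula}. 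So the paper's proof is ``reduce, then cite''; the combinatorial content (the expander/Hall condition arising from Lagrange inversion) lives entirely in those references.

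Where your write-up diverges, it runs into two genuine problems. First, the claimed identity $R_j^\lambda=\Embed_{\star_j}(\lambda)$ is false for $j\ge 3$: for the star with one black vertex and $j-1$ white leaves one has $N_{\star_j}(\lambda)=\sum_i\lambda_i^{\,j-1}$, a power sum of the row lengths, whereas $R_3^\lambda$ is (twice) the sum of contents --- already for $\lambda=(2)$ these differ. Consequently $\prod_v\Rfunct_{q(v)}$ is \emph{not} $\Embed_{G^q}$, and the statement ``$\Poly$ is spanned by $\Embed_{G'}$ with $G'$ a disjoint union of stars'' cannot be used as you do. Second, and relatedly, the ``per-graph'' identity $\operatorname{pr}(\Embed_G)=-\sum_q\prod_v\Rfunct_{q(v)}$ is ill-posed: $\Embed_G$ for a single connected $G$ need not lie in $\Poly$, the complement $\mathcal{J}$ is not canonical, and if the identity held for \emph{every} graded complement it would force $\Embed_G\in\Poly$, which is false in general. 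The result of \cite{DolegaF'eray'Sniady2008} is not a projection formula for individual $\Embed_G$; it is a coefficient-extraction formula $[\,R_{s_1}\cdots R_{s_\ell}\,]\bar F$ valid only once $\bar F$ is already known to be a polynomial function, and the expander condition appears there (via Hall's marriage theorem and Lagrange inversion) as the combinatorial bookkeeping of that extraction. Your sketch of the Lagrange-inversion mechanism is in the right spirit, but it must be aimed at the global $F\in\Poly$, not at a hypothetical per-graph identity.
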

\begin{proof}
This kind of result was proved in the special case $A=1$, $\gamma=0$ in 
our joint work with Dołęga and F\'eray \cite{DolegaFeraySniady2008}.
In the following we will explain how to extend that result to our more general setup.

\medskip

Our goal is to find a multivariate polynomial $K$ (with coefficients in $\Q[\gamma]$) 
with the property that
\[ F= K( \Rfunct_2,\Rfunct_3,\dots ).\]
We shall reuse the ideas presented in the proof of 
\cref{prop:generate-the-same=free}.
Our current goal can be reformulated as expressing 
\emph{the anisotropic Stanley polynomial for $F$} as the polynomial $K$ in terms of 
\emph{the anisotropic Stanley polynomial for $\Rfunct_2$},
\emph{the anisotropic Stanley polynomial for $\Rfunct_3$},\dots
with the coefficients in $\Q[\gamma]$.

\cref{lem:stanley-polunomial-isotropic-and-anisotropic-the-same} shows
equalities between the Stanley polynomials in the isotropic setup
and in its anisotropic counterpart, thus the original problem 
is equivalent to the following one: we define
\begin{equation}
\label{eq:isotropic-Stanley-formula}
\bar{F}(\lambda) := \sum_{G\in\mathcal{G}}  (-1)^{|\V_\bullet(G)|}\ {m}_G\  N_G(\lambda)
\end{equation}
and we ask \emph{how to express the function $\bar{F}$ in terms of the isotropic free cumulants:}
\[ \bar{F}= K( R_2,R_3,\dots )?\]
This problem has been explicitly solved in \cite{DolegaFeraySniady2008}
for the special case when
$\bar{F}=\Ch_n^{A=1}$ is the character of the symmetric groups and \eqref{eq:isotropic-Stanley-formula}
takes a specific form of the Stanley's character formula.
However, as we explained in a joint work with F\'eray \cite[Lemma 4.2]{FeraySniady2011}, 
the argument holds for any polynomial function $\bar{F}$ 
(note that the sign in \cite[Lemma 4.2]{FeraySniady2011} is incorrect).
\end{proof}

\subsection{Proof of \cref{coro:Kerov-Lassalle}}
\label{sec:proof-of-theo:kerov-lassalle}

\cref{coro:Kerov-Lassalle} is a consequence of the following more precise result.

\begin{restatable}%
{theorem}{kerovlassallefortop} 
\label{theo:kerov-lassalle}
For each $n\geq 1$
the homogeneous part of degree $n+1$ 
of Kerov--Lassalle polynomial for $\Ch_n$ is given by
\begin{equation}
\label{eq:kerov-lassalle-exact}
\Chtt_n = 
\sum_{M}  
\gamma^{n+1-|\V(M)|}
\sum_{\substack{q\colon \V_\bullet(G) \to\{2,3,\dots\} \\ \text{$(M,q)$ is an expander}}} 
\prod_{v\in \V_\bullet(G)} \Rfunct_{q(v)}
,
\end{equation}
where the sum runs over \emph{rooted, oriented, bicolored, connected maps $M$ with $n$ unlabeled edges}.
\end{restatable}
\begin{proof}
It is a direct consequence of \cref{coro:nonoriented-maps} and \cref{lem:extract-kerov}.
\end{proof}

\section*{Acknowledgments}

I thank Maciej Dołęga and Valentin F\'eray for several years of collaboration on topics related to the current paper.
A part of \cref{sec:jack-polynomials-motivations} was written by Maciej Dołęga.

Research supported by \emph{Narodowe Centrum Nauki}, grant number \linebreak 2014/15/B/ST1/00064.

\bibliographystyle{alpha}
\bibliography{biblio}

\begin{thebibliography}{DF{\'S}14}

\bibitem[Bia98]{Biane1998}
Philippe Biane.
\newblock Representations of symmetric groups and free probability.
\newblock {\em Adv. Math.}, 138(1):126--181, 1998.

\bibitem[BO05]{BorodinOlshanski2005}
Alexei Borodin and Grigori Olshanski.
\newblock Z-measures on partitions and their scaling limits.
\newblock {\em European Journal of Combinatorics}, 26(6):795--834, 2005.

\bibitem[Bri69]{Brillinger}
David~R. Brillinger.
\newblock The calculation of cumulants via conditioning.
\newblock {\em Annals of the Institute of Statistical Mathematics},
  21(1):215--218, 1969.

\bibitem[CJ{\'S}17]{Czyzewska-Jankowska2017}
Agnieszka Czy\.zewska-Jankowska and Piotr {\'S}niady.
\newblock Bijection between oriented maps and weighted non-oriented maps.
\newblock {\em Electron. J. Combin.}, 24(3):Paper 3.7, 34, 2017.

\bibitem[DF16]{DoleegaFeray2014}
Maciej Do{\l}\k{e}ga and Valentin F{\'e}ray.
\newblock Gaussian fluctuations of {Y}oung diagrams and structure constants of
  {J}ack characters.
\newblock {\em Duke Math. J.}, 165(7):1193--1282, 2016.

\bibitem[DF{\'S}10]{DolegaFeraySniady2008}
Maciej Do{\l}\k{e}ga, Valentin F{\'e}ray, and Piotr {\'S}niady.
\newblock Explicit combinatorial interpretation of {K}erov character
  polynomials as numbers of permutation factorizations.
\newblock {\em Adv. Math.}, 225(1):81--120, 2010.

\bibitem[DF{\'S}14]{DolegaFeraySniady2013}
Maciej Do{\l}\k{e}ga, Valentin F{\'e}ray, and Piotr {\'S}niady.
\newblock Jack polynomials and orientability generating series of maps.
\newblock {\em S\'em. Lothar. Combin.}, 70:Art. B70j, 50, 2014.

\bibitem[DH92]{DiaconisHanlon1992}
Persi Diaconis and Phil Hanlon.
\newblock Eigen-analysis for some examples of the metropolis algorithm.
\newblock {\em Contemporary Mathematics}, 138:99--117, 1992.

\bibitem[Do{\l}17]{Dolega2017a}
M.~Do{\l}{\k{e}}ga.
\newblock {Top degree part in $b$-conjecture for unicellular bipartite maps}.
\newblock {\em Electron. J. Combin}, 24(3):Paper 3.24, 39, 2017.

\bibitem[D{\'S}18]{DolegaSniady2014}
Maciej Do{\l}\k{e}ga and Piotr {\'S}niady.
\newblock Gaussian fluctuations of {J}ack-deformed random {Y}oung diagrams.
\newblock To appear in Probability Theory and Related Fields. Preprint
  \texttt{arXiv:1704.02352}, 2018.

\bibitem[F{\'S}11a]{FeraySniady2011a}
Valentin F{\'e}ray and Piotr {\'S}niady.
\newblock Asymptotics of characters of symmetric groups related to {S}tanley
  character formula.
\newblock {\em Ann. of Math. (2)}, 173(2):887--906, 2011.

\bibitem[F{\'S}11b]{FeraySniady2011}
Valentin F{\'e}ray and Piotr {\'S}niady.
\newblock Zonal polynomials via {S}tanley's coordinates and free cumulants.
\newblock {\em J. Algebra}, 334:338--373, 2011.

\bibitem[GJ96]{Goulden1996}
I.~P. Goulden and D.~M. Jackson.
\newblock Connection coefficients, matchings, maps and combinatorial
  conjectures for {J}ack symmetric functions.
\newblock {\em Trans. Amer. Math. Soc.}, 348(3):873--892, 1996.

\bibitem[IO02]{IvanovOlshanski2002}
Vladimir Ivanov and Grigori Olshanski.
\newblock Kerov's central limit theorem for the {P}lancherel measure on {Y}oung
  diagrams.
\newblock In {\em Symmetric functions 2001: surveys of developments and
  perspectives}, volume~74 of {\em NATO Sci. Ser. II Math. Phys. Chem.}, pages
  93--151. Kluwer Acad. Publ., Dordrecht, 2002.

\bibitem[Jac71]{Jack1970/1971}
Henry Jack.
\newblock A class of symmetric polynomials with a parameter.
\newblock {\em Proc. Roy. Soc. Edinburgh Sect. A}, 69:1--18, 1970/1971.

\bibitem[Kan93]{Kaneko1993}
Jyoichi Kaneko.
\newblock Selberg integrals and hypergeometric functions associated with jack
  polynomials.
\newblock {\em SIAM journal on mathematical analysis}, 24(4):1086--1110, 1993.

\bibitem[Ker93]{Kerov1993gaussian}
Serguei Kerov.
\newblock Gaussian limit for the {P}lancherel measure of the symmetric group.
\newblock {\em C. R. Acad. Sci. Paris S{\'e}r. I Math.}, 316(4):303--308, 1993.

\bibitem[Ker00]{Kerov2000}
S.~V. Kerov.
\newblock Anisotropic young diagrams and jack symmetric functions.
\newblock {\em Funct. Anal. Appl.}, 34:41--51, 2000.

\bibitem[KO94]{KerovOlshanski1994}
Serguei Kerov and Grigori Olshanski.
\newblock Polynomial functions on the set of {Y}oung diagrams.
\newblock {\em C. R. Acad. Sci. Paris S{\'e}r. I Math.}, 319(2):121--126, 1994.

\bibitem[Las08]{Lassalle2008a}
Michel Lassalle.
\newblock A positivity conjecture for {J}ack polynomials.
\newblock {\em Math. Res. Lett.}, 15(4):661--681, 2008.

\bibitem[Las09]{Lassalle2009}
Michel Lassalle.
\newblock Jack polynomials and free cumulants.
\newblock {\em Adv. Math.}, 222(6):2227--2269, 2009.

\bibitem[LZ04]{LandoZvonkin2004}
Sergei~K. Lando and Alexander~K. Zvonkin.
\newblock {\em Graphs on surfaces and their applications}, volume 141 of {\em
  Encyclopaedia of Mathematical Sciences}.
\newblock Springer-Verlag, Berlin, 2004.
\newblock With an appendix by Don B. Zagier, Low-Dimensional Topology, II.

\bibitem[Mat08]{Matsumoto2008}
Sho Matsumoto.
\newblock Jack deformations of {P}lancherel measures and traceless {G}aussian
  random matrices.
\newblock {\em Electron. J. Combin.}, 15(1):Research Paper 149, 18, 2008.

\bibitem[Nak96]{Nakajima1996}
Hiraku Nakajima.
\newblock {J}ack polynomials and {H}ilbert schemes of points on surfaces.
\newblock arXiv preprint alg-geom/9610021, 1996.

\bibitem[OO97]{OkounkovOlshanski1997}
Andrei Okounkov and Grigori Olshanski.
\newblock Shifted {J}ack polynomials, binomial formula, and applications.
\newblock {\em Math. Res. Lett.}, 4(1):69--78, 1997.

\bibitem[{\'S}ni06]{Sniady2006c}
Piotr {\'S}niady.
\newblock Gaussian fluctuations of characters of symmetric groups and of
  {Y}oung diagrams.
\newblock {\em Probab. Theory Related Fields}, 136(2):263--297, 2006.

\bibitem[{\'S}ni16]{Sniady2016a}
Piotr {\'S}niady.
\newblock Top degree of {J}ack characters and enumeration of maps.
\newblock arXiv:1506.06361v2, 2016.

\bibitem[VK81]{VershikKerov1981a}
A.~M. Vershik and S.~V. Kerov.
\newblock Asymptotic theory of the characters of a symmetric group.
\newblock {\em Funktsional. Anal. i Prilozhen.}, 15(4):15--27, 96, 1981.

\end{thebibliography}

\end{document}